\author{Masaki Kameko}
\address{Department of Mathematical Sciences,
Shibaura Institute of Technology,
307 Minuma-ku Fukasaku, Saitama-City 337-8570, Japan}
\email{kameko@shibaura-it.ac.jp}
\thanks{This work was supported by JSPS KAKENHI Grant Numbers JP25400097 and JP17K05263.}
\title[Non-torsion classes]{Non-torsion non-algebraic classes in the Brown-Peterson tower}
\subjclass[2010]{14C25,19E15, 55T25}
\keywords{Brown-Peterson cohomology, classifying space, algebraic cycle, cycle map}
\newtheorem{theorem}{Theorem}[section]
\newtheorem{proposition}[theorem]{Proposition}
\newtheorem{lemma}[theorem]{Lemma}
\newtheorem{corollary}[theorem]{Corollary}
\theoremstyle{definition}
\newtheorem{example}[theorem]{Example}
\newcommand{\bp}[1]{BP\langle #1 \rangle}
\newcommand{\xc}{X(\mathbb{C})}
\newcommand{\Op}{\mathcal{O}}
\begin{document}

\begin{abstract}
Generalizing the classical work of Atiyah and Hirzebruch on non-algebraic classes, 
recently Quick proved the existence of torsion non-algebraic elements in the Brown-Peterson tower.
We construct  non-torsion non-algebraic elements in the Brown-Peterson tower for the prime number $2$. 
\end{abstract}

 \maketitle 

\section{Introduction}\label{section:1}

Let $X$ be a smooth complex algebraic variety. 
We denote  the space of complex points  of $X$ with the analytic topology by $\xc$. 
The Chow group $CH^iX$, the group generated by finite linear combinations of 
closed subvarieties of $X$ 
of codimension $i$ modulo rational equivalence,  is a fundamental object to study 
in algebraic geometry. 
The cycle map  from the Chow group $CH^iX$ to the singular cohomology 
$H^{2i}(\xc;\mathbb{Z})$
is also very important. 
We call an element in the cohomology of $\xc$ algebraic if it is in the image of the cycle map. 
One of the most outstanding open problems concerning the cycle map is the Hodge conjecture.
The Hodge conjecture asserts that  for a projective smooth complex algebraic variety $X$, the cycle map 
\[
CH^iX\otimes \mathbb{Q} \to \mbox{\{Hodge classes in $H^{2i}(\xc;\mathbb{Q})$\}}
\]
is surjective.
In \cite{totaro-1997}, Totaro defined the Chow group $CH^iBG$ of the classifying space of 
a complex linear algebraic group $G$ by approximating it by quasi-projective smooth complex algebraic 
varieties and studied the cycle map
\[
\mathrm{cl}\colon CH^iBG\to H^{2i}(BG;\mathbb{Z}).
\]
Let $p$ be a prime number and $(\mathbb{Z}/p)^3$  the elementary abelian $p$-group of rank $3$.
In retrospect, in \cite{atiyah-hirzebruch-1962}, Atiyah and Hirzebruch used the non-surjectivity of 
the cycle map 
\[
\mathrm{cl}\colon CH^2 B(\mathbb{Z}/p)^3 \to H^4(B(\mathbb{Z}/p)^3;\mathbb{Z}),
\]
that is, the existence of non-algebraic element in $H^{4}(B(\mathbb{Z}/p)^3;\mathbb{Z})$, 
to construct a projective smooth complex algebraic variety $X$  and a non-algebraic class in 
$H^{4}(\xc;\mathbb{Z})$, that  is a counterexample for the integral Hodge conjecture.

On the other hand, for each prime number $p$ and non-negative integer $m$, there exists the truncated 
Brown-Peterson cohomology $\bp{m}^*(\xc)$ whose coefficient ring is 
$\mathbb{Z}_{(p)}[v_1, \dots, v_m]$, where
$\deg v_i=-2p^i+2$.   Also there exist natural transformations
\[
BP^{*}(\xc)\stackrel{\rho_{m}}{\longrightarrow}\bp{m}^{*}(\xc)
\]
and 
\[
\bp{n}^{*}(\xc)
\stackrel{\rho_{m}^n}{\longrightarrow} \bp{m}^{*}(\xc)  \quad 
\mbox{($n\geq m$),}
\]
such that $\rho_m(v_i)=0$ for $i>m$, $\rho_m(v_i)=v_i$ for $i\leq m$ and 
$\rho_{m}=\rho_{m}^{n} \circ \rho_n$.
Moreover, there exist the motivic truncated Brown-Peterson cohomology 
$\bp{m}_{\mathcal{M}}^{*,*}(X)$ and the cycle map
\[
\mathrm{cl}\colon \bp{m}_{\mathcal{M}}^{2i,i}(X)\to \bp{m}^{2i}(\xc).
\]
In the case $m=0$, the motivic truncated Brown-Peterson cohomology 
$\bp{0}_{\mathcal{M}}^{*,*}(X)$ is nothing but the motivic cohomology 
$
H^{*,*}_{\mathcal{M}}(X;\mathbb{Z}_{(p)}),
$
the Chow group localized at $p$, say  $CH^iX_{(p)}$, is  the motivic cohomology 
$H^{2i,i}(X;\mathbb{Z}_{(p)})$ and the cycle map 
\[
CH^iX_{(p)}=H^{2i,i}_{\mathcal{M}}(X;\mathbb{Z}_{(p)})
=\bp{0}_{\mathcal{M}}^{2i,i}(X)
\to \bp{0}^{2i}(\xc)=H^{2i}(\xc;\mathbb{Z}_{(p)})
\]
is the classical cycle map. Recently,  in \cite{quick-2016}, for each prime number $p$ 
and positive integer $m$, Quick proved the existence of a  projective smooth complex algebraic variety 
$X$ and a non-algebraic element in $\bp{m}^*(\xc)$, that is an element not in the image of the cycle map 
$\mathrm{cl}\colon \bp{m}_{\mathcal{M}}^{2i,i}X\to \bp{m}^{2i}(\xc)$.
Quick uses  the classifying space of the elementary abelian $p$-group 
$(\mathbb{Z}/p)^{m+3}$. 
So, Quick generalized the work of Atiyah and Hirzebruch to the Brown-Peterson tower.
As examples of Atiyah and Hirzebruch, Quick's examples are also  torsion classes.

One might wonder the existence of non-torsion examples. 
Counterexamples for the integral Hodge conjecture modulo torsion were obtained by Koll\'ar. 
Koll\'ar's examples are explained, for example,  in \cite{soule-voisin-2005}. 
In  \cite{colliot-thelene-szamuely-2010}, Colliot-Th\'el\`ene\ and Szamuely used the topological approach 
of Atiyah and Hirzebruch to give counterexamples for the integral Tate conjecture over finite fields
and asked the existence of counterexamples  for the integral Tate conjecture modulo torsion where 
Koll\'ar's approach does not  work. 
In \cite{pirutka-yagita-2015},  counterexamples for the integral Hodge and Tate conjectures modulo torsion
 through the topological approach were  given by Pirutka and Yagita
 using exceptional groups $G_2$, $F_4$, $E_8$ which contain  elementary abelian $p$-subgroups 
$(\mathbb{Z}/p)^3$ for $p=2, 3, 5$, respectively.
In \cite{kameko-2015}, the author replaced the exceptional groups by 
$G_1=({SL_p}\times {SL_p})/\mu_p$ for all prime numbers $p$, where 
${SL_p}$ is the special linear group over the complex numbers and $\mu_p$ is
the center of ${SL_p}$  and $\mu_p$ acts on ${SL_p} \times {SL_p}$
diagonally.

In this paper, we construct non-torsion non-algebraic elements in the 
Brown-Peterson tower for $p=2$ and for all positive integers $m$ by replacing 
the elementary abelian $p$-group $(\mathbb{Z}/p)^{m+3}$ by
\[
G_{m+1}={SL_p}^{m+2}/\mu_p,
\] 
the central quotient of product of special linear groups ${SL_p}$ where $\mu_p$ is the cyclic group of order $p$ and it acts on ${SL_p}^{m+2}$
diagonally. 
We use the mod $p$ cohomology of the classifying space of $G_n$
to compute the Atiyah-Hirzebruch spectral sequence converging to 
the truncated Brown-Peterson cohomology $\bp{m}^{*}(BG_n)$.
It is easy to  compute the mod $p$ cohomology of $G_n$ 
for $p=2$ but it is not  easy for odd prime numbers $p$. 
Hence our result is limited to the case $p=2$ although it is plausible 
that the similar results hold for all prime numbers $p$. 


\begin{theorem} \label{theorem:1.1}
Suppose that $p=2$.  For each non-negative integer $m$, there exists a 
non-torsion element $u$ in the integral cohomology of $BG_{m+1}$ such that
the element  $u$ lifts up to 
$
BP\langle m \rangle^{*}(BG_{m+1})
$ 
but not to 
$
BP\langle m+1 \rangle^{*}(BG_{m+1}).
$
\end{theorem}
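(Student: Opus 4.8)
The plan is to reduce the statement to a computation of Milnor operations on $H^*(BG_{m+1};\mathbb{F}_2)$ read off through the Atiyah--Hirzebruch spectral sequence (AHSS) for the theories $\bp{m}$. The governing principle is the cofiber sequence $\Sigma^{2^{m+2}-2}\bp{m+1}\xrightarrow{v_{m+1}}\bp{m+1}\to\bp{m}$, whose connecting map has mod $2$ leading term the Milnor primitive $Q_{m+1}$, of degree $2^{m+2}-1$. In the AHSS converging to $\bp{m}^*(X)$ the primary differentials are $d_{2^{k+1}-1}(z)=v_k\otimes Q_k\bar z$ for $1\le k\le m$, since $v_1,\dots,v_m$ are the only classes available as targets; the differential with leading term $Q_{m+1}$ needs the class $v_{m+1}$ and therefore occurs in the AHSS for $\bp{m+1}$ but not for $\bp{m}$. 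Thus it suffices to produce a non-torsion integral class $u$ whose mod $2$ reduction $\bar u$ is a permanent cycle killed by $Q_1,\dots,Q_m$ yet satisfies $Q_{m+1}\bar u\neq 0$: the first property lets $u$ lift to $\bp{m}^*(BG_{m+1})$, while $Q_{m+1}\bar u\neq0$ forces $u$ to support the new differential and hence obstructs any lift to $\bp{m+1}^*(BG_{m+1})$.

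First I would compute $H^*(BG_{m+1};\mathbb{F}_2)$ from the central extension $1\to\mu_2\to SL_2^{\,m+2}\to G_{m+1}\to1$, which yields a fibration $(BSL_2)^{m+2}\to BG_{m+1}\to K(\mathbb{Z}/2,2)$. Writing $c_1,\dots,c_{m+2}$ for the degree-$4$ generators of the fibre and $x=\iota_2$ for the base class, naturality with respect to the projections $G_{m+1}\to SL_2/\mu_2=PGL_2$ (where the corresponding class transgresses to $Q_1x$) shows that each $c_i$ transgresses to $Q_1x$; hence the differences $\gamma_i:=c_i+c_{m+2}$ $(1\le i\le m+1)$ survive, and a Poincar\'e series count gives
\[
H^*(BG_{m+1};\mathbb{F}_2)\cong\mathbb{F}_2[w_2,w_3,\gamma_1,\dots,\gamma_{m+1}],\qquad |w_2|=2,\ |w_3|=3,\ |\gamma_i|=4,
\]
with $w_2=x$ and $w_3=Sq^1x$. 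The classes $w_2^2$ and $\gamma_i$ are reductions of non-torsion integral classes (a Pontryagin class and the Euler classes of the subgroups $SO(4)\subset G_{m+1}$ spanned by the $i$-th and last factors), whereas $w_3$ is $2$-torsion; this is what will make the eventual $u$ non-torsion.

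Next I would determine the action of the $Q_j$. One has $Q_0w_2=w_3$, $Q_jw_2=0$ for $j\ge1$, $Q_jw_3=0$, and $Q_0\gamma_i=0$. By naturality from $SO(4)$ and the splitting principle, if $t^{(i)}_1,\dots,t^{(i)}_4$ are the Chern roots of the $i$-th $SO(4)$ (so $\sum_l t^{(i)}_l=0$), then
\[
Q_j\gamma_i=\gamma_i\!\cdot\!\sum_{l}\bigl(t^{(i)}_l\bigr)^{2^{j+1}-1},
\]
and this power sum is divisible by $w_3$; writing $Q_j=w_3D_j$ makes $D_j$ a derivation with $D_1\gamma_i=\gamma_i$ and $D_2\gamma_i=\gamma_i(\gamma_i+w_2^2)$. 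Since $w_3$ is a non-zero-divisor, $Q_j\bar u=0\iff D_j\bar u=0$, so constructing $\bar u$ becomes the linear-algebra problem of finding an element of $\mathbb{F}_2[w_2^2,\gamma_1,\dots,\gamma_{m+1}]$ annihilated by $D_1,\dots,D_m$ but not by $D_{m+1}$. The $m+1$ variables $\gamma_i$ provide exactly the room to do this: for $m=0$ one takes $\bar u=\gamma_1$, and for $m=1$ one takes $\bar u=\gamma_1\gamma_2$, where $Q_1(\gamma_1\gamma_2)=2w_3\gamma_1\gamma_2=0$ while $Q_2(\gamma_1\gamma_2)=w_3\gamma_1\gamma_2(\gamma_1+\gamma_2)\neq0$; the general case I would handle by an inductive, symmetric-function construction in the $\gamma_i$.

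I expect the main obstacle to be precisely this last construction together with its rigorous verification. Producing an explicit $\bar u$ whose first non-vanishing operation is exactly $Q_{m+1}$ requires controlling the higher terms of $D_j\gamma_i$ (which involve $w_2$ and $w_3$), and one must also upgrade the vanishing of the leading terms $Q_k\bar u$ to the assertion that $u$ is an \emph{honest} permanent cycle in the $\bp{m}$-AHSS --- that is, that the integral refinements $v_k\otimes\widetilde{Q_k}\,u$ of the differentials, and all higher differentials, vanish on $u$. This I would deduce from the sparseness of $H^*(BG_{m+1};\mathbb{Z})$ in the relevant degrees together with the explicit shape of $\bar u$. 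Granting these two points, $u$ is non-torsion by its construction in the subring generated by $w_2^2$ and the $\gamma_i$, it lifts to $\bp{m}^*(BG_{m+1})$ because it is a permanent cycle, and it fails to lift to $\bp{m+1}^*(BG_{m+1})$ because $Q_{m+1}\bar u\neq0$ makes it support the extra differential $d_{2^{m+2}-1}$; this establishes the theorem.
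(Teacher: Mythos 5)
Your outline follows the same route as the paper: the ring $H^{*}(BG_{m+1};\mathbb{Z}/2)=\mathbb{Z}/2[x_2,x_3,x_{41},\dots,x_{4,m+1}]$ (your $w_2,w_3,\gamma_i$), the factorization $Q_j=x_3\cdot(\text{derivation})$ on the degree-$4$ generators (your $D_j$ are, up to the $k=0$ term, the paper's $\partial_j$, and your values for $D_1,D_2$ agree with Proposition~\ref{proposition:2.3}), and the detection of the lifting obstruction by the differential $d_{2^{m+2}-1}=v_{m+1}Q_{m+1}+\cdots$ in the Atiyah--Hirzebruch spectral sequence. Your classes for $m=0,1$ are exactly the paper's $x_{41}$ and $x_{41}x_{42}$ from Examples~\ref{example:4.3} and \ref{example:4.4}. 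However, the two steps you explicitly defer are precisely where the content of the paper lies, and as written they are genuine gaps. The first is the general construction of $\bar u$: the paper takes $\bar u=\phi_m(x_{41}\cdots x_{4,m+1})=\partial_2\cdots\partial_m(x_{41}\cdots x_{4,m+1})$, and the nonvanishing of $Q_{m+1}\bar u=x_3\,\phi_{m+1}(x_{41}\cdots x_{4,m+1})$ rests on identifying $\phi_{\ell}(x_I)$ with the Moore-type determinant $e_{\ell}(x_{4i_1},\dots,x_{4i_{\ell}})$ (Lemma~\ref{lemma:3.2}). This is not a routine symmetric-function exercise that can be waved at; it is a specific closed form whose nonvanishing has to be proved, and it simultaneously explains why $Q_k\bar u=0$ for $k\le m$ (each $\partial_k$ already occurs in the composite and $\partial_k^2=0$).

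The second gap is more serious. Knowing $Q_k\bar u=0$ for $k\le m$ and $Q_{m+1}\bar u\ne 0$ only controls the \emph{leading terms} of the differentials $d_{2^{k+1}-1}$ on the $E_2$-page. To conclude that $u$ lifts to $BP\langle m\rangle^{*}(BG_{m+1})$ and fails to lift to $BP\langle m+1\rangle^{*}(BG_{m+1})$ one must show (i) that all intermediate differentials $d_v$ for $v$ not of the form $2^{k+1}-1$ vanish, (ii) that the higher-filtration correction terms of $d_{2^{k+1}-1}$ vanish on $\bar u$, and (iii) that the obstruction class $v_{m+1}\phi_{m+1}(x_I)x_3$ is still nonzero on the $E_{2^{m+2}-1}$-page of the $BP\langle m+1\rangle$-spectral sequence, i.e.\ has not been killed or truncated earlier. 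Your proposed remedy, ``sparseness of $H^{*}(BG_{m+1};\mathbb{Z})$,'' does not suffice: the integral cohomology carries the large torsion summand $\mathbb{Z}/2[x_2^2,x_3^2,x_{41},\dots]\{x_3,x_3^2\}$ in every relevant degree, so the columns receiving and emitting differentials are far from sparse. The paper resolves (i)--(iii) only by computing every page of the spectral sequence (Proposition~\ref{proposition:4.2}), which requires the module-theoretic Lemma~\ref{lemma:5.1} to kill intermediate differentials and correction terms, the comparison with Morava $K$-theory of $B(\mathbb{Z}/2)^{m+3}$ together with the injectivity of $\iota_n^{*}$ (Lemma~\ref{lemma:5.2}, Proposition~\ref{proposition:2.2}) to pin down $d_{2^{r+2}-1}$, and Lemma~\ref{lemma:3.3}/Proposition~\ref{proposition:3.4} to identify the kernel of $Q_{r+1}$ among the surviving classes. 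Until you supply substitutes for these, the argument is an accurate plan rather than a proof.
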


To state our result, Corollary~\ref{corollary:1.3}, obtained from Theorem~\ref{theorem:1.1}, 
let $\mathbb{G}_{m}$ be the multiplicative group of the non-zero complex numbers.
For a reductive complex linear algebraic group $G$,
we may approximate the classifying space 
$
B(\mathbb{G}_{m} \times G)
$
by projective smooth complex algebraic varieties. That is, for each positive integer $k$, there exists a projective smooth complex algebraic variety $X$ and a morphism of algebraic varieties 
that induces a continuous map $f\colon \xc \to B(\mathbb{G}_m\times G)$ which is a 
$k$-equivalence. In particular, 
\[
f^{*}\colon H^{i}(BG;\mathbb{Z}_{(p)}) \to H^{i}(\xc ;\mathbb{Z}_{(p)})
\]
is an isomorphism for $i<k$.
For the approximation of the above classifying space by projective smooth complex algebraic varieties, 
we refer the reader to  Pirutka and Yagita \cite{pirutka-yagita-2015} and Antieau \cite{antieau-2016} 
for the detail. 
For the motivic Brown-Peterson cohomology $\bp{m}_{\mathcal{M}}^{2i,i}(X)$, we refer the reader to Quick
\cite{quick-2016} for the detail. 
With these preparations, we have Corollary~\ref{corollary:1.3}  on the existence of non-torsion non-algebraic classes
of projective smooth complex algebraic varieties
from Theorem~\ref{theorem:1.1}.
But, before we state Corollary~\ref{corollary:1.3}, we show that, rationally,  all cohomology classes obtained from the classifying space of a complex linear algebraic group are algebraic.


\begin{proposition}\label{proposition:1.2}
Let $G$ be a 
complex linear algebraic group. Let $X$ be a projective smooth complex algebraic variety. Suppose that $f\colon \xc  \to BG$ is the map induced by
a morphism of complex algebraic varieties from $X$ to $BG$ and $v'\in \bp{m}^{2i}(BG)$. Then, there exists a nonzero integer $\alpha$ such that $\alpha f^{*}(v')$ is algebraic in the sense that 
$\alpha f^{*}(v')$ is in the image of the cycle map
\[
\mathrm{cl} \colon \bp{m}^{2i,i}_{\mathcal{M}}(X)\to \bp{m}^{2i}(\xc ).
\]
\end{proposition}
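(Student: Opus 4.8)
The plan is to prove the stronger statement that the cycle map
\[
\mathrm{cl}\colon \bp{m}^{2i,i}_{\mathcal{M}}(BG)\to \bp{m}^{2i}(BG)
\]
on the classifying space itself is surjective after tensoring with $\mathbb{Q}$, that is, its image has finite index. The proposition then follows by naturality: since the morphism of algebraic varieties inducing $f$ makes $f^{*}$ commute with the cycle maps for $BG$ and for $X$, a nonzero integral multiple of $v'$ that is algebraic on $BG$ pulls back to an algebraic multiple of $f^{*}(v')$ on $\xc$, and $\alpha$ may be taken to be an integer clearing that index.

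First I would reduce to the case that $G$ is connected reductive. The unipotent radical $R_{u}(G)$ is isomorphic as a variety to affine space, hence contractible, so $BG\to B(G/R_{u}(G))$ is an integral equivalence; and since $\pi_{0}(G)$ is finite, a transfer argument identifies $H^{*}(BG;\mathbb{Q})$ with the $\pi_{0}(G)$-invariants of $H^{*}(BG^{0};\mathbb{Q})$. For connected reductive $G$ the identification $H^{*}(BG;\mathbb{Q})=H^{*}(BT;\mathbb{Q})^{W}$ together with the Atiyah--Segal description of the rational Chern character shows that $H^{*}(BG;\mathbb{Q})$ is generated as a $\mathbb{Q}$-algebra by Chern classes of complex representations of $G$; in particular it is concentrated in even degrees. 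Consequently the Atiyah--Hirzebruch spectral sequence
\[
E_{2}^{s,t}=H^{s}(BG;\bp{m}^{t}(\mathrm{pt}))\Rightarrow \bp{m}^{s+t}(BG)
\]
collapses rationally for degree reasons, and its associated graded identifies $\bp{m}^{*}(BG)\otimes\mathbb{Q}$ with $H^{*}(BG;\mathbb{Q})\otimes_{\mathbb{Q}}\mathbb{Q}[v_{1},\dots,v_{m}]$.

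It then remains to exhibit algebraic generators. Each representation of $G$ gives an algebraic vector bundle on Totaro's approximations of $BG$, hence Chern classes in the oriented motivic theory $\bp{m}^{*,*}_{\mathcal{M}}(BG)$ whose images under $\mathrm{cl}$ are the topological Chern classes, while the coefficients $v_{1},\dots,v_{m}$ are algebraic as elements of $\bp{m}^{*,*}_{\mathcal{M}}(\mathrm{pt})$. Hence every product of a monomial in the $v_{j}$ with Chern classes of representations lies in the image of $\mathrm{cl}$. Filtering $\bp{m}^{2i}(BG)$ by the Atiyah--Hirzebruch filtration and inducting downward, any class whose leading term is such a product differs from an algebraic class by one of strictly higher filtration; the filtration being finite in each total degree, the induction terminates and shows that $\bp{m}^{2i}(BG)\otimes\mathbb{Q}$ is spanned by algebraic classes. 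Clearing denominators in the finite $\mathbb{Q}$-combination expressing $v'$ yields the desired integer $\alpha$.

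The main obstacle I anticipate is this last step: promoting the rational, associated-graded statement that Chern classes and the $v_{j}$ generate to an honest lift, through the Atiyah--Hirzebruch filtration, by genuine classes lying in the image of the motivic cycle map, while controlling denominators so that a single integer $\alpha$ suffices. This requires knowing that the motivic Chern classes exist on the approximating varieties and map compatibly to the topological ones, and that the rational collapse is compatible with products. By contrast, the group-theoretic reductions and the classical generation of $H^{*}(BG;\mathbb{Q})$ by Chern classes of representations are standard and may be quoted.
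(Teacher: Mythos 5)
Your overall strategy (Chern classes of representations are algebraic via $BGL_\ell$, they generate $H^{*}(BG;\mathbb{Q})$, the rational Atiyah--Hirzebruch spectral sequence collapses) matches the first half of the paper's argument. The gap is in the step where you claim that ``the filtration being finite in each total degree, the induction terminates and shows that $\bp{m}^{2i}(BG)\otimes\mathbb{Q}$ is spanned by algebraic classes,'' and in the final clearing of denominators. For $m\geq 1$ the coefficients $v_1,\dots,v_m$ have negative degrees, so in a fixed total degree $2i$ the Atiyah--Hirzebruch filtration of $\bp{m}^{2i}(BG)$ has infinitely many nonzero stages: a general element of $\bp{m}^{2i}(BG)\otimes\mathbb{Q}$ is an \emph{infinite} series $\sum_{I,J}(\beta_{IJ}/\alpha_{IJ})\,v_I y_J$ with $\deg y_J$ unbounded (this is why $\bp{m}^{*}(BG)\otimes\mathbb{Q}$ is a power series ring $\bp{m}^{*}[[y_1,\dots,y_n]]\otimes\mathbb{Q}$ rather than a polynomial ring). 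Your downward induction on filtration therefore does not terminate on $BG$, and no single integer clears the denominators of infinitely many terms. Moreover, even where the combination is finite, clearing denominators only shows that $\alpha v'$ agrees with an algebraic class after rationalization, i.e.\ up to an element of the kernel of $j\colon \bp{m}^{2i}(BG)\to\bp{m}^{2i}(BG)\otimes\mathbb{Q}$; on the infinite-dimensional space $BG$ this kernel is torsion of possibly unbounded exponent, so the discrepancy cannot be absorbed into a further integer factor.

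The paper avoids both problems by pulling back to $\xc$ \emph{before} truncating: since $\deg y_J>\dim\xc$ forces $j(f^{*}(v_Iy_J))=0$, only finitely many terms survive rationally on $\xc$, so a common multiple $\alpha''$ of the finitely many relevant $\alpha_{IJ}$ exists and the truncated sum $v''$ is algebraic; and since $\xc$ is a finite complex, the kernel of $j$ on $\bp{m}^{2i}(\xc)$ is annihilated by a single integer $\alpha'$, giving $\alpha=\alpha'\alpha''$. You correctly flagged the denominator and filtration issue as the main obstacle, but the resolution you propose (finiteness of the filtration on $BG$) is exactly what fails for $m\geq 1$; the missing idea is to exploit the finite dimensionality of $X$ at this point of the argument rather than only at the end.
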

 
 \begin{proof}
First, we recall that 
\[
BP^{2i,i}_{\mathcal{M}}(BGL_\ell) \to BP^{2i}(BGL_\ell)
\]
is an isomorphism \cite[Proposition 9.1]{yagita-2005}
and 
\[
\rho_m\colon BP^{2i}(BGL_\ell)\to \bp{m}^{2i}(BGL_\ell)
\]
is surjective for the complex general linear group $GL_\ell$. Therefore, Chern classes 
in $\bp{m}^{*}(BG)$ of complex representations of $G$ are 
algebraic.
Next, we recall that 
 the rational cohomology $H^{*}(BG;\mathbb{Q})$ is generated by Chern classes of complex representations of $G$
 \cite[Proof of Theorem 1]{landweber-1972}.
The rational cohomology of $BG$ is a polynomial algebra generated by
even degree elements $y_1, \dots, y_n$. The Atiyah-Hirzebruch spectral sequence for
$\bp{m}^{*}(BG)\otimes \mathbb{Q}$ collapses at the $E_2$-level and 
\[
\bp{m}^{*}(BG)\otimes \mathbb{Q}=\bp{m}^{*}[[ y_1, \dots, y_n]]\otimes \mathbb{Q}.
\]
We may assume $y_1, \dots, y_n$ are Chern classes of
complex representations of $G$, so that $y_1, \dots, y_n$  are elements in 
$
\bp{m}^{*}(BG)
$ and they are algebraic.
Let 
\[
j \colon \bp{m}^{2i}(BG) \to \bp{m}^{2i}(BG)\otimes \mathbb{Q}
\]
be the homomorphism induced by the inclusion map $\mathbb{Z}_{(p)}\to \mathbb{Q}$.
Then, 
\[
j(v')=\sum_{I, J} \dfrac{\beta_{IJ}}{\alpha_{IJ}} v_{I}y_{J}
\]
where $v_I$ ranges over monomials in $v_1, \dots, v_m$, 
$y_J$ ranges over monomials in $y_1, \dots, y_n$, under the restriction
\[
\deg v_I+\deg y_{J}=2i,
\]
$\alpha_{IJ}$ are nonzero integers and $\beta_{IJ}$ are integers.
The element $j(v')$ is an infinite linear combination of such $v_I y_J$.
Let $\dim \xc $ be the dimension of $\xc $ as a differentiable manifold.
Since there exist only finite number of $v_Iy_J$ such that
\[
\deg v_I+\deg y_J=2i, \quad \deg y_J \leq \dim \xc ,
\]
we may choose a common multiplier $\alpha''$ of $\alpha_{IJ}$ of such $(I,J)$.
Let us define the element $v''$ in $\bp{m}^{2i}(BG)$ by
\[
v''=\sum_{I,J}  \dfrac{\alpha'' \beta_{IJ}}{\alpha_{IJ}} v_I y_{J}
\]
where $\deg v_I+\deg y_J=2i$, $\deg y_J\leq  \dim \xc $.
Then, $v''$ is algebraic and 
\[
j (\alpha'' v'-v'')=\sum_{I,J} \dfrac{\alpha''\beta_{IJ}}{\alpha_{IJ}} v_I y_J
\]
where $\deg v_I+\deg y_J=2i$, $\deg y_J> \dim \xc $.
Therefore, in $\bp{m}^{2i}(\xc )\otimes \mathbb{Q}$, we have 
\[
j(f^{*}(\alpha'' v'-v''))=0
\]
On the other hand, since $\xc $ has the homotopy type of a finite complex, 
the kernel of 
\[
j\colon \bp{m}^{2i}(\xc ) \to \bp{m}^{2i}(\xc ) \otimes \mathbb{Q}
\]
is torsion and there exists a nonzero integer $\alpha'$ such that 
\[
\alpha'(\alpha'' f^{*}(v')-f^{*}(v''))=0.
\]
So, putting $\alpha=\alpha'\alpha''$, we have that
\[
\alpha f^{*}(v')=\alpha' f^{*}(v'')
\]
and it is algebraic.
 \end{proof}
 
 Now, we state and prove Corollary~\ref{corollary:1.3} on the existence of non-torsion, non-algebraic classes assuming Theorem~\ref{theorem:1.1}.
 

\begin{corollary} \label{corollary:1.3}
Suppose that $p=2$. For each non-negative integer $m$, there exists a  projective smooth 
complex algebraic variety $X$ 
and an element $v$ in $\bp{m}^{*}(X)$
such that 
\begin{itemize}
\item[(1)] $v$ is non-algebraic, that is, not in the image of the cycle map 
\[
\mathrm{cl}\colon \bp{m}_{\mathcal{M}}^{2i,i}(X) \to \bp{m}^{2i}(\xc)
, 
\]
\item[(2)] there exists a nonzero integer $\alpha$ such that the scalar multiple $\alpha v$ of $v$ is algebraic, 
and 
\item[(3)] $\rho^{m}_0(v)$ is a non-torsion class in $H^{2i}(\xc;\mathbb{Z}_{(p)})$,
\end{itemize}
where $2i=\deg v$.
\end{corollary}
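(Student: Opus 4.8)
The plan is to transport the class furnished by Theorem~\ref{theorem:1.1} to a projective approximation of a classifying space and then to detect non-algebraicity by the failure of that class to lift one step up the Brown--Peterson tower. Write $u\in H^{2i}(BG_{m+1};\mathbb{Z})$ for the non-torsion class of Theorem~\ref{theorem:1.1} and $\tilde u\in \bp{m}^{2i}(BG_{m+1})$ for its chosen lift. Since $G_{m+1}$ is reductive, so is $\mathbb{G}_m\times G_{m+1}$, and I would fix a positive integer $k$ (large; it will suffice to take $k>2i+2^{\,m+2}$) together with a projective smooth complex algebraic variety $X$ and a morphism inducing a $k$-equivalence $f\colon \xc\to B(\mathbb{G}_m\times G_{m+1})$. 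Let $\pi\colon B(\mathbb{G}_m\times G_{m+1})\to BG_{m+1}$ be the projection and set
\[
v=f^{*}\pi^{*}\tilde u\in \bp{m}^{2i}(\xc),\qquad v'=\pi^{*}\tilde u.
\]

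Conditions (2) and (3) are then essentially immediate. For (2) I would apply Proposition~\ref{proposition:1.2} to the reductive group $\mathbb{G}_m\times G_{m+1}$ and the class $v'$: this produces a nonzero integer $\alpha$ with $\alpha v=\alpha f^{*}(v')$ algebraic. For (3), naturality of $\rho^{m}_{0}$ gives $\rho^{m}_{0}(v)=f^{*}\pi^{*}(\bar u)$, where $\bar u\in H^{2i}(BG_{m+1};\mathbb{Z}_{(p)})$ is the reduction of $u$. Here $\bar u$ is non-torsion because $\mathbb{Z}\to\mathbb{Z}_{(p)}$ is injective, $\pi^{*}$ is split injective because $\pi$ admits a section, and $f^{*}$ is an isomorphism in degree $2i<k$; hence $\rho^{m}_{0}(v)$ is non-torsion.

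The substance of the corollary is (1), which I would prove by contradiction along the Atiyah--Hirzebruch line: an algebraic class must lift one further step up the tower, contradicting the defining property of $u$. Concretely there are two facts to establish. First, \emph{every algebraic class lifts}: the topological cycle map factors through $p$-local complex cobordism exactly as in the classical Atiyah--Hirzebruch/Totaro factorization, now through $BP^{2i}(\xc)$ by the compatible motivic refinement of Quick~\cite{quick-2016}, so the image of $\mathrm{cl}\colon \bp{m}^{2i,i}_{\mathcal{M}}(X)\to\bp{m}^{2i}(\xc)$ is contained in $\mathrm{im}\,\rho_m$; since $\rho_m=\rho^{m+1}_{m}\circ\rho_{m+1}$, it is \emph{a fortiori} contained in $\mathrm{im}\,\rho^{m+1}_{m}$. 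Second, \emph{$v$ does not lift}: the class $\tilde u$, hence $v'=\pi^{*}\tilde u$, does not lift to $\bp{m+1}^{2i}$ over the classifying space by Theorem~\ref{theorem:1.1}, and I would show this non-liftability is inherited by $v=f^{*}(v')$. Combining the two facts shows $v$ cannot be algebraic.

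The main obstacle is the transfer of non-liftability through the finite $k$-equivalence $f$. Because $v$ already lifts to $\bp{m}^{2i}(\xc)$, the only obstruction to lifting it to $\bp{m+1}$ is the first differential in the Atiyah--Hirzebruch spectral sequence beyond those seen by $\bp{m}$; for $p=2$ this is the differential governed by the Milnor primitive $Q_{m+1}$, whose target sits in cohomological degree $2i+2^{\,m+2}-1$. Since $\xc$ has the homotopy type of a finite complex, for each fixed total degree only finitely many coefficient degrees contribute, and a $k$-equivalence induces an isomorphism $f^{*}\colon H^{s}(B(\mathbb{G}_m\times G_{m+1});M)\to H^{s}(\xc;M)$ for every coefficient group $M$ in the range $s<k$. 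Choosing $k>2i+2^{\,m+2}$ therefore makes $f^{*}$ an isomorphism on all $E_2$-terms in the relevant source and target degrees and intertwines the two spectral sequences in the range that decides liftability, so the nonzero obstruction for $v'$ is carried to a nonzero obstruction for $v$. The other delicate point is the precise form of the first fact, for which I would lean on the motivic Brown--Peterson tower of Quick rather than reprove the factorization of the cycle map from scratch.
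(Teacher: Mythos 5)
Your proposal is correct and follows essentially the same route as the paper: transport the class of Theorem~\ref{theorem:1.1} through a projective approximation of $B(\mathbb{G}_m\times G_{m+1})$ (your $\pi^{*}\tilde u$ is the paper's Künneth class $1\otimes u$), deduce (2) from Proposition~\ref{proposition:1.2}, (3) from the splitting of $\pi$ and the $k$-equivalence, and (1) from the persistence of the nonzero Atiyah--Hirzebruch differential (indeed $d_{2^{m+2}-1}$, by Proposition~\ref{proposition:4.2}) under $f^{*}$ for $k$ large. The only divergence is in justifying that algebraic classes lift: you invoke Quick's factorization of the $\bp{m}$-level cycle map through $BP^{2i}(\xc)$, whereas the paper first composes with $\rho^{m}_{0}$ down to $H^{2i}(\xc;\mathbb{Z}_{(p)})$ and then needs only Totaro's classical factorization of $CH^{i}X_{(p)}\to H^{2i}(\xc;\mathbb{Z}_{(p)})$ through $BP$ --- both are valid, the paper's version requiring slightly less input from the motivic theory.
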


\begin{proof}
Let $G_{m+1}$ be the algebraic group $G_{m+1}$ in Theorem~\ref{theorem:1.1} and 
 $u\in H^{2i}(BG_{m+1};\mathbb{Z}_{(p)})$ be the element $u$  in Theorem~\ref{theorem:1.1}.
For the sake of notational simplicity, let $G=\mathbb{G}_m\times G_{m+1}$

By the K\"unneth theorem, we have
\[
H^{2i}(BG;\mathbb{Z}_{(p)})=
\bigoplus_{k} H^{k}(B\mathbb{G}_m;\mathbb{Z}_{(p)}) \otimes H^{2i-k}(BG_{m+1};\mathbb{Z}_{(p)})
\]
Let us denote by $1\otimes u$ the element 
 in $H^{2i}(BG;\mathbb{Z}_{(p)})$ 
 corresponding  to the element $1\otimes u$
in 
\[
H^{0}(B\mathbb{G}_m;\mathbb{Z}_{(p)}) \otimes H^{2i}(BG_{m+1};\mathbb{Z}_{(p)}).
\]
In the Atiyah-Hirzebruch spectral sequence for $\bp{m}^{*}(BG)$, 
\[
(1\otimes u) \otimes 1 \in E_2^{2i,0}
\]
is a permanent cycle. Let $v'$ be an element in $\bp{m}^{2i}(BG)$ such that 
\[
\rho^m_0(v')=1\otimes u. 
\]
In the Atiyah-Hirzebruch spectral sequence for $\bp{m+1}^{*}(BG)$, 
for some $r$, 
\[
d_r((1\otimes u) \otimes 1)\not=0 \in E_r^{2i+r,1-r}.
\]

Choose a projective smooth complex algebraic variety  $X$ and a morphism from $X$ to $BG$, so that
\[
f^*\colon H^{k}(BG;\mathbb{Z}_{(p)})\to H^{k}(\xc ;\mathbb{Z}_{(p)})
\]
is an isomorphism for $k\leq 2i+r$, where $f\colon \xc \to BG$ is the map induced by the morphism of algebraic varieties. Then, 
in the Atiyah-Hirzebruch spectral sequence for $\bp{m+1}^{*}(\xc )$, 
\[
d_r(f^{*}(1\otimes u)\otimes 1)\not=0.
\]
Therefore, $f^{*}(1\otimes u)$ is a non-torsion class and $f^{*}(1\otimes u)$ is in the image of $\rho^m_0$ but  not in the image of $\rho^{m+1}_0$.

Let $v=f^{*}(v')$. 
Then, $\rho^m_0(v)=f^{*}(1\otimes u)$. So, 
the element $v$ satisfies  the condition (3).
By Proposition~\ref{proposition:1.2}, the element $v$ 
satisfies the condition (2). 
As for the condition (1), 
we consider the following commutative diagram:
\[
\begin{diagram}
\node{H^{2i}(\xc ;\mathbb{Z}_{(p)})} \arrow{e,t}{=} 
\node{\bp{0}^{2i}(\xc )} 
\node{\bp{m}^{2i}(\xc )}\arrow{w,t}{\rho^m_0}
\\
\node{CH^{i}X_{(p)}} \arrow{e,t}{=} \arrow{n,l}{\mathrm{cl}}
\node{\bp{0}_{\mathcal{M}}^{2i,i}(X)}  \arrow{n,l}{\mathrm{cl}}
\node{\bp{m}_{\mathcal{M}}^{2i,i}(X).} \arrow{n,l}{\mathrm{cl}}
\arrow{w,t}{\rho_0^m}
\end{diagram}
\]
The image of the composition of $\rho^m_0$ and the cycle map 
\[
\mathrm{cl} \colon \bp{m}_{\mathcal{M}}^{2i,i}(X )\to \bp{m}^{2i}(\xc ),
\]
is a subset of the image of the cycle map 
\[
\mathrm{cl} \colon CH^{i}(X)\to H^{2i}(\xc ;\mathbb{Z}_{(p)}). 
\]
However, Totaro proved that the image of the cycle map is in the image of $\rho_0$. Thus, since $f^{*}(1\otimes u)$ does not lift up to 
$\bp{m+1}^{2i}(\xc )$, the element $v$  is not in the image of the cycle map
and it satisfies the condition (1).
\end{proof}

The Brown-Peterson cohomology plays an interesting role  in both algebraic geometry 
and algebraic topology, especially in conjunction with the study of the cycle map. 
In \cite{totaro-1997}, Totaro showed that the cycle map
\[
\mathrm{cl}\colon CH^{i} X _{(p)}  \to H^{2i}(\xc;\mathbb{Z}_{(p)})
\]
factors through $BP^{*}(\xc)\otimes_{BP^*} \mathbb{Z}_{(p)}$.
He  also conjectured that
the cycle map 
\[
CH^i BG _{(p)} \to (BP^{*}(BG)\otimes_{BP^*}\mathbb{Z}_{(p)})^{2i}
\]
is an isomorphism for a complex linear algebraic group $G$ when the Brown-Peterson cohomology of
$BG$ has no nonzero odd degree elements.
The Brown-Peterson cohomology had been studied in algebraic topology long before Totaro's work.
It is also conjectured in \cite{kono-yagita-1993} that the Brown-Peterson cohomology of the classifying space 
of a compact Lie group has no nonzero odd degree element.
This conjecture  also remains as an open problem and 
it is a precursor of Totaro's conjecture. Note that the maximal compact subgroup of a reductive complex 
linear algebraic group is a compact Lie group and they are homotopy equivalent to each other.
So, the truncated Brown-Peterson cohomology theories of classifying spaces of these groups are the same to
each other.
 In particular, $G_n$'s maximal compact subgroup is $SU(p)^{n+1}/\mu_p$, 
 where $SU(p)$ is the special unitary group.

In this paper, through the computation of the Atiyah-Hirzebruch spectral sequence, 
we give a new family of compact Lie groups $SU(2)^{n+1}/\mu_2$
for which the Brown-Peterson cohomology of the classifying space
 has no nonzero odd degree element.


\begin{theorem} \label{theorem:1.4}
Suppose that $p=2$, $m$ is a non-negative integer and $n$ is  a positive integer. The truncated 
Brown-Peterson cohomology
$\bp{m}^*BG_n$ has no nonzero odd degree elements  if and only if $m\geq n+1$.
Moreover,  
 the Brown-Peterson cohomology  $BP^*(BG_n)$ has no nonzero odd degree elements
 and 
the natural transformation $\rho_m:BP^*(BG_n)
\to \bp{m}^{*}(BG_n)$ is surjective if and only if $m\geq n+1$.
\end{theorem}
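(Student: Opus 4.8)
The plan is to reduce everything to the mod $2$ cohomology of $BG_n$ together with the action of the Milnor primitives $Q_i$, and then to read off $\bp{m}^{*}(BG_n)$ and $BP^{*}(BG_n)$ from the Atiyah-Hirzebruch spectral sequence (AHSS). The organizing principle is that, for $p=2$, the first family of differentials in the AHSS converging to $\bp{m}^{*}(X)$ is $d_{2^{i+1}-1}(x)=v_i\,Q_i(x)$ for $1\le i\le m$, together with the integral Bockstein $\beta\sim Q_0$ accounting for torsion, whereas for $BP^{*}(X)$ every $Q_i$ with $i\ge 1$ is available. Since $v_i$ has degree $-(2^{i+1}-2)$ and $Q_i$ has degree $2^{i+1}-1$, each $v_iQ_i$ raises total degree by $1$, so these are honest differentials, and the theorem becomes a statement about which odd-degree classes survive once $Q_1,\dots,Q_m$ (respectively all $Q_i$) have acted.

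First I would compute $H^{*}(BG_n;\mathbb{Z}/2)$. The compact form gives a central extension $1\to\mu_2^{n}\to G_n\to SO(3)^{n+1}\to 1$ and hence a fibration $B\mu_2^{n}\to BG_n\to BSO(3)^{n+1}$. In its Serre spectral sequence the fibre generators $t_1,\dots,t_n\in H^{1}(B\mu_2^{n})$ transgress to $d_2(t_i)=w_2^{(i)}+w_2^{(n+1)}$, because the extension class of $G_n$ is the image of the class $(w_2^{(1)},\dots,w_2^{(n+1)})$ of $SU(2)^{n+1}\to SO(3)^{n+1}$ under $\mu_2^{n+1}\to\mu_2^{n+1}/\mu_2^{\mathrm{diag}}=\mu_2^{n}$. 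The Kudo transgression theorem then gives $d_3(t_i^2)=Sq^1(w_2^{(i)}+w_2^{(n+1)})=w_3^{(i)}+w_3^{(n+1)}$, and $d_5(t_i^4)=Sq^2(w_3^{(i)}+w_3^{(n+1)})$ vanishes after the identifications already forced, so each $t_i^4$ is a permanent cycle. The $n$ relations $d_2(t_i)$ collapse all $w_2^{(j)}$ to a single $w_2$ and the $d_3(t_i^2)$ collapse all $w_3^{(j)}$ to a single $w_3$, leaving
\[
H^{*}(BG_n;\mathbb{Z}/2)=\mathbb{Z}/2[w_2,w_3,c_1,\dots,c_n],\qquad \deg w_2=2,\ \deg w_3=3,\ \deg c_j=4,
\]
with $w_3=Sq^1w_2$ and $c_j$ the survivor of $t_j^4$; the odd part is $w_3\cdot\mathbb{Z}/2[w_2,w_3^2,c_1,\dots,c_n]$.

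The heart of the argument, and the step I expect to be the main obstacle, is to determine the $Q_i$ on this algebra, in particular $Q_ic_j$. The classes $w_2,w_3$ are easy, as $Q_iw_2,Q_iw_3$ come by naturality from the Wu formula on $BSO(3)^{n+1}$; but $c_j=t_j^4$ restricts trivially to the fibre, so $Q_ic_j$ lies in the ideal $(w_2,w_3)$ and is invisible to the fibration. To pin it down I would restrict to the maximal elementary abelian $2$-subgroup $A\cong(\mathbb{Z}/2)^{n+2}$, whose rank equals the Krull dimension $n+2$ of the algebra above; since that algebra is polynomial, hence reduced, Quillen's theorem makes $r\colon H^{*}(BG_n;\mathbb{Z}/2)\to H^{*}(BA;\mathbb{Z}/2)=\mathbb{Z}/2[x_1,\dots,x_{n+2}]$ injective, and there $Q_ix=x^{2^{i+1}}$. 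Computing $r(w_2),r(w_3),r(c_j)$ explicitly — via the quadratic form of the extension $\mu_2\to\tilde A\to A$ and the restriction of the Chern classes of $SU(2)^{n+1}$ to the relevant finite subgroups — then determines $Q_ic_j$ by injectivity. The expected qualitative output is that the $Q_i$ arrange the odd part into a tower in which $Q_1$ removes the bottom class $w_3$ and each further $Q_2,\dots,Q_{n+1}$ removes one more level built from the $c_j$, so that the iterated Margolis homology $H(\cdots H(H^{*}(BG_n;\mathbb{Z}/2);Q_1)\cdots;Q_m)$ retains odd classes precisely when $m\le n$ and becomes even exactly at $m=n+1$.

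Granting this, I would finish as follows. Feeding the $Q_i$-computation into the AHSS, the differentials $v_1Q_1,\dots,v_mQ_m$ kill every odd class when $m\ge n+1$; one must still check that no odd survivor is reintroduced by later differentials or hidden extensions, and the finite generation of the cohomology together with the explicit even $E_\infty$ are the tools for this collapse check. Thus $\bp{m}^{*}(BG_n)$ has no odd elements for $m\ge n+1$, while for $m\le n$ the surviving odd class exhibited above is a nonzero odd element, giving the first ``if and only if''. For $BP^{*}(BG_n)$ all $Q_i$ are present, so the same analysis with no upper cut-off on $i$ kills all odd classes and $BP^{*}(BG_n)$ is even. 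Finally, for $\rho_m\colon BP^{*}(BG_n)\to\bp{m}^{*}(BG_n)$: when $m\le n$ the target has an odd element which, since $BP^{*}(BG_n)$ is even and $\rho_m$ preserves degree, cannot lie in the image, so $\rho_m$ is not surjective; when $m\ge n+1$ both sides are even and one proves surjectivity by comparing the two spectral sequences, all relevant generators being detected already in $H^{*}(BG_n;\mathbb{Z}/2)$ and lifting compatibly. This reproduces the threshold $m\ge n+1$ in all three assertions.
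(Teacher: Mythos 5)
Your overall strategy---compute $H^{*}(BG_n;\mathbb{Z}/2)$, determine the Milnor operations by restricting to a rank-$(n+2)$ elementary abelian $2$-subgroup, and feed this into the Atiyah-Hirzebruch spectral sequence---is the same as the paper's, and your computation of the cohomology ring (via the central extension $\mu_2^{n}\to G_n\to SO(3)^{n+1}$ rather than the paper's induction over $BSL_2\to BG_n\to BG_{n-1}$) and your injectivity argument for the restriction to $(\mathbb{Z}/2)^{n+2}$ are sound. The genuine gap is that the heart of the theorem is asserted rather than proved: the sentence ``the expected qualitative output is that the $Q_i$ arrange the odd part into a tower \dots\ so that the iterated Margolis homology retains odd classes precisely when $m\le n$'' is exactly the statement that has to be established, and it is where all of the paper's real work lies. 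Concretely, one finds $Q_j(c_i)=w_3\sum_{k}\alpha_{jk}c_i^{2^k}$ with polynomial coefficients $\alpha_{jk}$ (the paper's Proposition 2.3), and the survival or death of odd classes is governed by iterated operators $\phi_r=\partial_2\cdots\partial_r$ applied to square-free monomials $c_{i_1}\cdots c_{i_\ell}$. One must prove both that $\phi_\ell(c_{i_1}\cdots c_{i_\ell})$ equals a nonzero Moore-type determinant $e_\ell(c_{i_1},\dots,c_{i_\ell})$ (the paper's Lemma 3.2, which is what guarantees a nonzero odd class actually survives to $E_\infty$ when $m\le n$) and an exactness statement (Lemma 3.3/Proposition 3.4, proved by induction on $n$) saying that the kernel of $Q_{r+1}$ on the surviving odd part is exactly the image of the next stage (which is what guarantees nothing odd survives when $m\ge n+1$). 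Neither is a formality, and without them the threshold $m\ge n+1$ is not established in either direction.

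A second, smaller gap is your treatment of the differentials. The identity $d_{2^{i+1}-1}=v_iQ_i$ holds only for the first nontrivial differential out of $E_2$; at later pages the differential is defined on a subquotient and can a priori carry correction terms, and one must also rule out nonzero differentials $d_u$ for $2^{r+1}\le u\le 2^{r+2}-2$. The paper handles the first point by mapping to the Atiyah-Hirzebruch spectral sequence for the Morava $K$-theory $K(r+1)^{*}(B(\mathbb{Z}/2)^{n+2})$, where the first differential is known to be $v_{r+1}Q_{r+1}$, and using injectivity of $\iota_n^{*}$ to pull the formula back with an error term that is then killed by a module-theoretic argument (Lemma 5.1); the second point is also Lemma 5.1. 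Your remark that ``one must still check that no odd survivor is reintroduced by later differentials or hidden extensions'' names the issue but does not resolve it. Finally, the surjectivity claim for $\rho_m$ when $m\ge n+1$ requires comparing the $E_\infty$-terms for $BP^{*}(BG_n)$ and $\bp{m}^{*}(BG_n)$, which again rests on the explicit computation you have deferred.
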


For  $n=1$, $G_1=SO(4)$. The Brown-Peterson cohomology of $BG_1=BSO(4)$ is computed  by Kono and Yagita~\cite[Theorem 5.5]{kono-yagita-1993}. 

This paper is organized as follows: In Section~\ref{section:2}, we compute the mod $2$ cohomology 
of $BG_n$ and the action of the Milnor operations.
In Section~\ref{section:3}, we define a homomorphism $\phi_r$ and prove Lemmas~\ref{lemma:3.2} 
and \ref{lemma:3.3}. We need the homomorphism $\phi_r$ to state our main result. 
We use these lemmas in Sections~\ref{section:4} and \ref{section:5}.
In Section~\ref{section:4}, 
we write down the Atiyah-Hirzebruch spectral sequence for $\bp{m}^{*}(BG_n)$ as 
Propositions~\ref{proposition:4.1} and \ref{proposition:4.2}. 
In Section~\ref{section:5}, we prove the results stated in Section~\ref{section:4}.

\section{The mod $2$ cohomology ring}\label{section:2}

In this section, we compute the mod $2$ cohomology ring of $BG_n$ and the action of 
the Milnor operations on it.


First, we compute the mod $2$ cohomology of $BG_n$ for $n\geq 1$.
We consider the projection $\pi$ obtained by removing the $(n+1)$-th factor, that is, 
\[
\pi \colon {SL_2}^{n+1}\to {SL_2}^{n}, \quad  \pi(m_0, m_1, \dots, m_n)=(m_0, m_1, \dots, m_{n-1}).
\]
It induces a fibre sequence
\[
B{SL_2} \longrightarrow  BG_n \stackrel{\pi}{\longrightarrow}
BG_{n-1}. 
\]
It is clear that $G_0$, $BG_0$ are homotopy equivalent to $SO(3)$, $BSO(3)$, respectively. Therefore, we have
\[
H^{*}(BG_0;\mathbb{Z}/2)=\mathbb{Z}/2[x_2, x_3],
\]
where $x_2$ is the generator of $H^{2}(BSO(3);\mathbb{Z}/2)$, 
$x_3=Q_0 x_2$ and $Q_0$ is the mod $2$ Bockstein homomorphism.
Let us recall the mod $2$ cohomology of $B{SL_2}$. 
The mod $2$ cohomology of $B{SL_2}$ is a polynomial algebra generated by the single generator 
$c_2$ of degree $4$. 
The Leray-Serre spectral sequence associated with the above fibre sequence
has the $E_2$-term isomorphic to 
\[
H^{*}(BG_{n-1};\mathbb{Z}/2) \otimes H^{*}(B{SL_2};\mathbb{Z}/2).
\]
The $E_2$-term is generated by $1\otimes c_2$ as an algebra over $E_2^{*,0}$.
So, in order to prove that the spectral sequence collapses at the $E_2$-level, it suffices to show that 
$1 \otimes c_2$ is a permanent cycle. For dimensional reasons, $d_r(1 \otimes c_2)=0$ except for $r=5$
and $d_r(1\otimes c_2)\in E_5^{5,0}$.
Let us consider the map $\sigma \colon G_{n-1}\to G_n$ induced by  
$\sigma \colon {SL_2}^n \to {SL_2}^{n+1}$, given by
\[
\sigma(m_0, \dots, m_{n-1})=(m_0,  \dots, m_{n-1}, m_{n-1}).
\]
Then, 
$\pi \circ \sigma$ is the identity map. 
Hence, 
all elements in  $E_2^{*,0}$ are permanent cycles. Therefore,  we have
$d_{5}(1\otimes c_{2})=0$ and so $1\otimes c_{2}$ is also a permanent cycle.
So, the spectral sequence collapses at the $E_2$-level and  we have 
\[
H^{*}(BG_n;\mathbb{Z}/2)=
H^{*}(BG_{n-1};\mathbb{Z}/2)\otimes H^{*}(B{SL_2};\mathbb{Z}/2)
\]
 as an algebra over $H^{*}(BG_{n-1};\mathbb{Z}/2).$
In particular, for $n=1$, we have
\[
H^{*}(BG_1;\mathbb{Z}/2)=\mathbb{Z}/2[x_2,x_3, x_4]
\]
for some $x_{4}$. We make the choice of $x_{4}$ in due course.
Inductively, we have 
\[
H^{*}(BG_n;\mathbb{Z}/2)=\mathbb{Z}/2[x_2,x_3, x_{41}, \dots, x_{4n}]
\]
for some degree $4$ elements $x_{41}, \dots, x_{4n}$. 
For $i=1, \dots, n$, 
let $\pi_i \colon G_n\to G_1$ be the map induced by the projection of ${SL_2}^{n+1}$ to ${SL_2}^{2}$ given by
\[
(m_0, \dots, m_n) \mapsto (m_0,  m_{i}).
\]
Then, we have the following proposition.


\begin{proposition}
\label{proposition:2.1}
The mod $2$ cohomology of $BG_n$ is a polynomial algebra over $\mathbb{Z}/2$, 
\[
\mathbb{Z}/2[x_2, x_3, x_{41}, \dots, x_{4n}],
\]
where $\deg x_2=2$, $\deg x_3=3$, $\deg x_{4i}=4$.
The generator $x_{4i}$ is $\pi_i^*(x_4)$.
\end{proposition}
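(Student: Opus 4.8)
The polynomial structure $H^*(BG_n;\mathbb{Z}/2)=\mathbb{Z}/2[x_2,x_3,x_{41},\dots,x_{4n}]$ is already in hand from the collapse of the Leray--Serre spectral sequences in the tower, so the only remaining assertion is that the degree-four generators may be taken to be the pullbacks $\pi_i^*(x_4)$. Since $H^*(BG_n;\mathbb{Z}/2)$ is a free graded-commutative algebra with the stated Poincar\'e series, and $x_2,x_3$ are the only indecomposables in degrees $2$ and $3$, a set of elements hitting a basis of the indecomposables $QH^*$ is automatically a polynomial generating set. The only decomposable in degree $4$ is $x_2^2$, and $\dim QH^4=n$, so the whole proposition reduces to showing that the $n$ classes $\pi_1^*(x_4),\dots,\pi_n^*(x_4)$ are linearly independent modulo $x_2^2$.

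To detect this independence I would restrict to $SL_2$-subgroups. First I fix the choice of $x_4$: in the fibre sequence $B{SL_2}\to BG_1\xrightarrow{\pi} BG_0$ the fibre inclusion $j\colon B{SL_2}\to BG_1$ sends $x_2^2\mapsto 0$, while a genuine generator survives to $1\otimes c_2$ in the collapsed spectral sequence, so I may choose $x_4\in H^4(BG_1;\mathbb{Z}/2)$ with $j^*(x_4)=c_2$. Next, for each $k\in\{0,1,\dots,n\}$ the $k$-th factor inclusion $SL_2\to {SL_2}^{n+1}$ meets the diagonal $\mu_2$ trivially (its only nontrivial intersection would be $(-I,\dots,-I)$, which is not supported on a single factor when $n\geq 1$) and therefore descends to a subgroup inclusion $j_k\colon SL_2\hookrightarrow G_n$, inducing $j_k^*\colon H^*(BG_n;\mathbb{Z}/2)\to\mathbb{Z}/2[c_2]$. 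The main computation is to trace the composites $\pi_i\circ j_k\colon SL_2\to G_1$: if $k\neq 0,i$ then both coordinates selected by $\pi_i$ are trivial, so $\pi_i\circ j_k$ is the constant map and $j_k^*\pi_i^*(x_4)=0$; if $k=i$ then $\pi_i\circ j_k$ is exactly the fibre inclusion $j$ of $G_1$, so $j_i^*\pi_i^*(x_4)=c_2$.

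Finally I would record that $j_k^*(x_2)=0$ for every $k$, since $j_k^*$ factors either through $H^2(B{SL_2};\mathbb{Z}/2)=0$ or through a constant map; hence $j_k^*$ annihilates the decomposable $x_2^2$. Writing $x_{4i}=\pi_i^*(x_4)$, for each $k\in\{1,\dots,n\}$ we then get $j_k^*\big(\sum_i a_i x_{4i}+b\,x_2^2\big)=a_k c_2$, which forces $a_k=0$ whenever such a combination is decomposable; thus $x_{41},\dots,x_{4n}$ are independent modulo decomposables and furnish the required basis of $QH^4$, completing the identification. Alongside this one also has the compatibility $\pi_i^*(x_2)=x_2$ and $\pi_i^*(x_3)=x_3$, which follows from $p_0\circ\pi_i=p_0$ for the projection $p_0$ to the $0$-th factor, so the names $x_2,x_3$ are unambiguous. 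I expect the genuine work to lie entirely in the bookkeeping of the composites $\pi_i\circ j_k$ together with pinning down $j^*(x_4)=c_2$; once the independence statement is set up, the conclusion is formal.
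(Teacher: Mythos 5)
Your treatment of the second assertion --- that the classes $\pi_i^*(x_4)$ may be taken as the degree-$4$ polynomial generators --- is correct and in fact more explicit than the paper, which leaves this point essentially implicit after establishing the tensor decomposition. The bookkeeping of the composites $\pi_i\circ j_k$ (constant when $k\neq 0,i$, equal to the fibre inclusion when $k=i$), together with $j_k^*(x_2)=0$ because $H^2(BSL_2;\mathbb{Z}/2)=0$, does show that $\pi_1^*(x_4),\dots,\pi_n^*(x_4)$ are linearly independent modulo the decomposable $x_2^2$, and the Poincar\'e-series argument then upgrades a lift of a basis of indecomposables to a polynomial generating set. Note only that your normalization $j^*(x_4)=c_2$ determines $x_4$ just up to adding $x_2^2$; the paper fixes this ambiguity later via $\iota_1^*$, which matters for Proposition~2.3 but not for the present statement.

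The gap is in the part you declare ``already in hand.'' The collapse of the Leray--Serre spectral sequence of $BSL_2\to BG_n\to BG_{n-1}$ is the actual substance of the paper's argument for this proposition, and it is not automatic: the only possibly nonzero differential on $1\otimes c_2$ is $d_5(1\otimes c_2)\in E_5^{5,0}$, and $E_2^{5,0}=H^5(BG_{n-1};\mathbb{Z}/2)$ is nonzero (already for $n=1$ it contains $x_2x_3$), so no dimensional reason kills it. The paper disposes of it by exhibiting the section $\sigma\colon G_{n-1}\to G_n$, $\sigma(m_0,\dots,m_{n-1})=(m_0,\dots,m_{n-1},m_{n-1})$, of $\pi$; since $\sigma^*\circ\pi^*=\mathrm{id}$, the bottom row $E_2^{*,0}$ injects into $H^*(BG_n;\mathbb{Z}/2)$, hence supports no incoming differentials, forcing $d_5(1\otimes c_2)=0$ and the collapse. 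Without this (or an equivalent argument) the polynomial structure, on which your entire indecomposables computation rests, is unsupported; you should either supply the section argument or another proof of the collapse.
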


Next, we consider the subgroup $(\mathbb{Z}/2)^{n+2}$ of $G_n$.
Let us consider elements 
\[
I=\begin{pmatrix} 1 & 0 \\ 0 & 1 \end{pmatrix}, \;
\xi=\begin{pmatrix} -1 & 0 \\ 0 & -1 \end{pmatrix}, \; 
\eta=\begin{pmatrix} i & 0 \\ 0 & -i \end{pmatrix}, \; 
\zeta=\begin{pmatrix} 0 & i \\ i & 0 \end{pmatrix}, 
\]
in ${SL_2}$. 
Let 
\[
\Delta \colon {SL_2} \to {SL_2}^{n+1}
\]
be the diagonal map, that is, $\Delta(m)=(m,\dots, m)$.
The subgroup $\mu_2$ of the center of ${SL_2}^{n+1}$ is the subgroup 
generated by $\Delta(\xi)=(\xi, \dots, \xi)$.
For $i=0,\dots, n$, let us write
\[
\Gamma_i \colon {SL_2}\to {SL_2}^{n+1}
\]
for the inclusion map into the $i$-th factor  such that
\[
\Gamma_i(m)=(m_0, \dots, m_n), 
\]
where $m_i=m$, $m_j=I$ for $j\not=i$.
Let 
\[
\pi'\colon {SL_2}^{n+1}\to {SL_2}^{n+1}/\mu_2=G_n
\]
be the obvious projection.
Then, the elements 
\[
\pi'(\Delta(\eta)), \; \pi'(\Delta(\zeta)), \; \pi'(\Gamma_i(\xi)), \quad \mbox{($i=1, \dots, n$)},
\]
generate a subgroup isomorphic to the elementary abelian $2$-group of rank $n+2$.
We denote it by $(\mathbb{Z}/2)^{n+2}$
and we denote 
the inclusion map of $(\mathbb{Z}/2)^{n+2}$ into $G_{n}$ by
\[
\iota_n \colon (\mathbb{Z}/2)^{n+2}\to G_n.
\]

Now, we recall some facts on   the induced homomorphism 
\[
\iota_1^*\colon H^{*}(BG_1;\mathbb{Z}/2)
\to H^{*}(B(\mathbb{Z}/2)^3;\mathbb{Z}/2).
\]
We denote by 
$s_1, s_2, t$  
generators of $H^1(B(\mathbb{Z}/2)^{3};\mathbb{Z}/2)$ corresponding to the generators 
$\pi'(\Delta(\eta)), \pi'(\Delta(\zeta)), \pi'(\Gamma_{1}(\xi))$.
As above, let $x_2$ be the generator of $H^2(BG_1;\mathbb{Z}/2)=\mathbb{Z}/2$.
Let $x_3=Q_0(x_2)$. 
Then, 
\[
\iota_1^*(x_2)=s_1^2+s_1s_2+s_2^2, \quad 
\iota_1^*(x_3)=s_1^2s_2+s_1s_2^2, 
\]
respectively.
The image of $\iota_1^*$ is generated by the above two elements and
\[
t^4+(s_1^2+s_1s_2+s_2^2)t^2+(s_1^2s_2+s_1s_2^2)t,
\]
as an algebra over $\mathbb{Z}/2$.
See \cite[Section 6]{kameko-2015}, if needed,  
for the details of the image  of generators of $H^{*}(BG_1;\mathbb{Z}/2)$ by the induced homomorphism 
$\iota_1^*$.
We choose the generator $x_4$ of $H^4(BG_1;\mathbb{Z}/2)$, so that its image 
$\iota_1^*(x_4)$ is the above element of degree $4$.
Then, it is easy to see that the following proposition holds.


\begin{proposition}
\label{proposition:2.2}
The induced homomorphism 
\[
\iota_n^*\colon H^{*}(BG_n;\mathbb{Z}/2)\to H^{*}(B(\mathbb{Z}/2)^{n+2};\mathbb{Z}/2)
\]
is injective.
\end{proposition}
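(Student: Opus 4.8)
The plan is to reduce the statement to the algebraic independence of the images of the polynomial generators. Since Proposition~\ref{proposition:2.1} gives $H^{*}(BG_n;\mathbb{Z}/2)=\mathbb{Z}/2[x_2,x_3,x_{41},\dots,x_{4n}]$ as a polynomial algebra, the graded homomorphism $\iota_n^{*}$ is injective if and only if its values on the generators are algebraically independent in $H^{*}(B(\mathbb{Z}/2)^{n+2};\mathbb{Z}/2)=\mathbb{Z}/2[s_1,s_2,t_1,\dots,t_n]$, where $t_j$ denotes the degree-one generator dual to $\pi'(\Gamma_j(\xi))$. So the first task is to compute $\iota_n^{*}(x_2)$, $\iota_n^{*}(x_3)$ and the $\iota_n^{*}(x_{4i})$ explicitly.

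To compute them I would exploit the composite $\pi_i\circ\iota_n$. Tracking the generators of $(\mathbb{Z}/2)^{n+2}$ through the projection $\pi_i\colon G_n\to G_1$ shows that $\pi_i\circ\iota_n$ factors as $\iota_1\circ\bar\pi_i$, where $\bar\pi_i\colon(\mathbb{Z}/2)^{n+2}\to(\mathbb{Z}/2)^3$ is the homomorphism sending $\pi'(\Delta(\eta)),\pi'(\Delta(\zeta))$ to $\pi'(\Delta(\eta)),\pi'(\Delta(\zeta))$, sending $\pi'(\Gamma_i(\xi))$ to $\pi'(\Gamma_1(\xi))$, and killing $\pi'(\Gamma_j(\xi))$ for $j\neq i$; in cohomology $\bar\pi_i^{*}$ fixes $s_1,s_2$ and sends $t\mapsto t_i$. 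Combining this with $x_{4i}=\pi_i^{*}(x_4)$ and the recalled formulas for $\iota_1^{*}$, and writing $a=s_1^2+s_1s_2+s_2^2$ and $b=s_1^2s_2+s_1s_2^2$, I obtain
\[
\iota_n^{*}(x_2)=a,\qquad \iota_n^{*}(x_3)=b,\qquad \iota_n^{*}(x_{4i})=t_i^4+at_i^2+bt_i=:w_i .
\]

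It remains to show that $a,b,w_1,\dots,w_n$ are algebraically independent, which I would do by a dimension count through an integral extension. Put $R=\mathbb{Z}/2[a,b,w_1,\dots,w_n]\subseteq S=\mathbb{Z}/2[s_1,s_2,t_1,\dots,t_n]$. The identity $\prod_{\ell}(T+\ell)=T^3+aT+b$, where $\ell$ ranges over the three nonzero $\mathbb{Z}/2$-linear combinations of $s_1$ and $s_2$, exhibits $s_1$ and $s_2$ as roots of a monic cubic over $R$; and each $t_i$ is a root of the monic quartic $T^4+aT^2+bT+w_i$ over $R$ (using characteristic $2$). Hence every algebra generator of $S$ is integral over $R$, so $S$ is a module-finite extension of $R$ and therefore $\dim R=\dim S=n+2$. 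A ring generated over a field by $n+2$ elements and of Krull dimension $n+2$ is a polynomial algebra on those generators, so $a,b,w_1,\dots,w_n$ are algebraically independent, and by the reduction in the first paragraph $\iota_n^{*}$ is injective.

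The step I expect to require the most care is the bookkeeping that establishes the factorization $\pi_i\circ\iota_n=\iota_1\circ\bar\pi_i$ at the level of the finite subgroups, that is, correctly identifying the images of $\pi'(\Delta(\eta)),\pi'(\Delta(\zeta))$ and of each $\pi'(\Gamma_j(\xi))$ under $\pi_i$; once the restriction formulas above are pinned down, the integral-extension argument is routine. One could alternatively recognize $a$ and $b$ as the rank-two Dickson invariants and invoke the freeness of $\mathbb{Z}/2[s_1,s_2]$ over $\mathbb{Z}/2[a,b]$, but the monic-relation and dimension argument is self-contained and generalizes the $t_i$ cleanly.
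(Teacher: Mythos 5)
Your proof is correct. For comparison: the paper does not actually prove Proposition~\ref{proposition:2.2} --- after recalling from \cite{kameko-2015} the formulas for $\iota_1^*(x_2)$, $\iota_1^*(x_3)$ and fixing $x_4$ so that $\iota_1^*(x_4)=t^4+at^2+bt$, it simply declares the injectivity ``easy to see.'' Your write-up supplies the two steps this elides. The first step, the factorization $\pi_i\circ\iota_n=\iota_1\circ\bar\pi_i$ on the elementary abelian subgroups and the resulting formulas $\iota_n^*(x_2)=a$, $\iota_n^*(x_3)=b$, $\iota_n^*(x_{4i})=t_i^4+at_i^2+bt_i$, is exactly what the paper's setup (the definition $x_{4i}=\pi_i^*(x_4)$ in Proposition~\ref{proposition:2.1} together with the displayed $\iota_1^*$ formulas) is engineered to produce, and your bookkeeping of the generators under $\pi_i$ is right. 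The second step, the algebraic independence of $a,b,w_1,\dots,w_n$, is your own contribution, and the integrality-plus-Krull-dimension argument is sound: $s_1,s_2$ satisfy the monic cubic $T^3+aT+b=\prod_{\ell\neq 0}(T+\ell)$ and each $t_i$ the monic quartic $T^4+aT^2+bT+w_i$, so $S$ is module-finite over $R$, whence $\dim R=n+2$ and the $n+2$ generators admit no relation. Two very minor points: you use $\pi_i^*(x_2)=x_2$ and $\pi_i^*(x_3)=x_3$ implicitly (immediate, since $H^2(BG_n;\mathbb{Z}/2)$ is one-dimensional, $\iota_n^*\pi_i^*(x_2)=a\neq 0$, and $x_3=Q_0x_2$), and the alternative you mention --- recognizing $a,b$ as the rank-two Dickson invariants and checking freeness directly --- is presumably closer to what the author had in mind via \cite{kameko-2015}; either route is fine.
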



Finally, we close this section by determining the action of Milnor operations on $x_3$, $x_4$. 
It plays an important role 
in the computation of the differentials of the Atiyah-Hirzebruch spectral sequence 
for $\bp{m}^*(BG_n)$, $n\geq 2$. 
Let us denote by $Q_j$ the Milnor operation of degree $2^{j+1}-1$.

\begin{proposition}
\label{proposition:2.3}
For $j\geq 1$, $0 \leq k \leq j-1$, 
there exist polynomials  $\alpha_{jk}$ in $x_2^2,x_3^2$ such that
\[
{Q}_j (x_3)=  \alpha_{j0} x_3^2, \quad 
{Q}_j (x_4)= x_3 \left( \sum_{k=0}^{j-1}  \alpha_{jk} x_4^{2^k}\right)
\]
in $H^{*}(BG_1;\mathbb{Z}/2)=\mathbb{Z}/2[x_2,x_3,x_4]$.
\end{proposition}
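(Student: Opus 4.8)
The plan is to push the whole computation, via the injectivity of $\iota_1^*$ from Proposition~\ref{proposition:2.2}, into $H^*(B(\mathbb{Z}/2)^3;\mathbb{Z}/2)=\mathbb{Z}/2[s_1,s_2,t]$, where the Milnor operations are completely explicit: each $Q_j$ is a derivation, $Q_j^2=0$, and $Q_j(z)=z^{2^{j+1}}$ on any one-dimensional class $z$. Writing $P=\iota_1^*(x_2)=s_1^2+s_1s_2+s_2^2$ and $R=\iota_1^*(x_3)=s_1s_2(s_1+s_2)$, which are the Dickson invariants of $V=\langle s_1,s_2\rangle$, I would first record the structural identity that a direct expansion of the product over the four vectors of $V$ gives
\[
\iota_1^*(x_4)=t^4+Pt^2+Rt=\prod_{v\in V}(t+v).
\]
Since $\iota_1^*$ is injective and carries $x_2^2,x_3^2$ to $P^2,R^2$, it suffices to verify both asserted identities after applying $\iota_1^*$, and the product form of $\iota_1^*(x_4)$ is what makes $Q_j(x_4)$ manageable.

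For $x_3$, the derivation property together with $Q_j(s_i^2)=0$ gives
\[
Q_j(R)=s_1^2s_2^{2^{j+1}}+s_1^{2^{j+1}}s_2^2=s_1^2s_2^2\bigl(s_1^{2^{j+1}-2}+s_2^{2^{j+1}-2}\bigr),
\]
and comparing with $R^2=s_1^2s_2^2(s_1^2+s_2^2)$ shows $Q_j(R)=\alpha_{j0}R^2$ with $\alpha_{j0}=(s_1^{2^{j+1}-2}+s_2^{2^{j+1}-2})/(s_1^2+s_2^2)$. This quotient is a square in $\mathbb{Z}/2[s_1,s_2]$ (it is a polynomial in $s_1^2,s_2^2$) and it is invariant under the $GL_2(\mathbb{F}_2)$-action permuting $\{s_1,s_2,s_1+s_2\}$, because $Q_j$ and $R$ are. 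As the square root of a $GL_2(\mathbb{F}_2)$-invariant square is again invariant, $\alpha_{j0}$ is a square in the Dickson ring $\mathbb{Z}/2[P,R]$, that is, it lies in $\mathbb{Z}/2[P^2,R^2]$, so $\alpha_{j0}$ is a polynomial in $x_2^2,x_3^2$.

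For $x_4$, differentiating the product and using $Q_j(t+v)=(t+v)^{2^{j+1}}$ yields
\[
Q_j(\iota_1^*(x_4))=\sum_{v\in V}(t+v)^{2^{j+1}}\!\!\prod_{w\in V,\,w\ne v}\!\!(t+w)=\iota_1^*(x_4)\sum_{v\in V}(t+v)^{2^{j+1}-1}.
\]
Writing $\iota_1^*(x_4)^{2^k}=t^{2^{k+2}}+P^{2^k}t^{2^{k+1}}+R^{2^k}t^{2^k}$ and matching, power of $t$ by power of $t$, against the target $R\sum_{k=0}^{j-1}\alpha_{jk}\,\iota_1^*(x_4)^{2^k}$, the coefficients of $t^{2^{j+1}},\dots,t^{4}$ give $\alpha_{j,j-1}=1$ and the downward recursion
\[
\alpha_{j,\ell-2}=\alpha_{j\ell}R^{2^\ell}+\alpha_{j,\ell-1}P^{2^{\ell-1}}\qquad(2\le\ell\le j,\ \alpha_{jj}=0),
\]
which determines every $\alpha_{jk}$ and involves only even powers of $P$ and $R$; hence each $\alpha_{jk}$ is automatically a polynomial in $x_2^2,x_3^2$. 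Moreover the coefficient of $t$ ties the $k=0$ term to the relation $\alpha_{j0}R^2=Q_j(R)$, which is precisely the relation defining $\alpha_{j0}$ in the $x_3$ computation; this is why the same $\alpha_{j0}$ occurs in both formulas, the reason being that the lowest $t$-term of $\iota_1^*(x_4)$ is exactly $Rt=\iota_1^*(x_3)\,t$.

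The conceptual content is now in place, and the main obstacle is to verify the two low-degree consistency relations left over after the downward recursion, namely the coefficients of $t$ and $t^2$, which read $\alpha_{j0}R^2=Q_j(R)$ and $\alpha_{j0}RP+\alpha_{j1}R^3=Q_j(P)$. These amount to explicit identities for the power sums $\sum_{v\in V}v^i$ in terms of $P$ and $R$, and confirming that the recursively built $\alpha_{j0},\alpha_{j1}$ reproduce the directly computed $Q_j(R)$ and $Q_j(P)$ is the one genuinely computational point. I expect to dispatch it either by evaluating these power sums directly, or by an induction on $j$ using the Milnor recursion $Q_{j+1}=\mathrm{Sq}^{2^{j+1}}Q_j+Q_j\,\mathrm{Sq}^{2^{j+1}}$, which propagates the displayed forms from one stage to the next.
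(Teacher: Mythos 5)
Your framework is sound and genuinely different from the paper's proof, which introduces the operator $\mathcal{O}_j=Q_j+x_2^{2^{j-1}}Q_{j-1}+x_3^{2^{j-1}}Q_{j-2}$, checks that it annihilates $s_1,s_2$ and sends $t$ to $x_4^{2^{j-1}}$, and so obtains a three-term recursion in $j$ for the $\alpha_{jk}$, proved by induction on $j$. But as written your argument has a real gap, and it sits exactly where the content of the proposition lives. Matching coefficients at $t^{2^{j+1}},\dots,t^{4}$ only \emph{defines} candidate coefficients $\alpha_{j,j-1}=1,\alpha_{j,j-2},\dots,\alpha_{j0}$ via your downward recursion; the identity $Q_j(x_4)=x_3\sum_k\alpha_{jk}x_4^{2^k}$ is precisely the assertion that the two remaining coefficients, of $t^2$ and $t$, also match, i.e.\ that $\alpha_{j0}R^2=Q_j(R)$ and $\alpha_{j0}RP+\alpha_{j1}R^3=Q_j(P)$. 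The first of these is moreover what identifies the $\alpha_{j0}$ produced by the recursion with the $\alpha_{j0}:=Q_j(R)/R^2$ you defined independently in the $x_3$ computation; without it the two displayed formulas are not known to share the same $\alpha_{j0}$. Saying you ``expect to dispatch'' these relations leaves unproved the one step that carries the theorem.

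The gap is fillable inside your setup. Writing $T=\iota_1^*(x_4)=t^4+Pt^2+Rt$ and differentiating term by term (simpler than differentiating the product) gives $Q_j(T)=Rt^{2^{j+1}}+Q_j(P)t^2+Q_j(R)t$, so your coefficient matching is exactly the right-division of additive polynomials $t^{2^{j+1}}=\sum_k\alpha_{jk}T^{2^k}+b_jt^2+a_jt$ with $a_j,b_j\in\mathbb{Z}/2[P,R]$. Evaluating this at $t=s_1$ and $t=s_2$, where $T$ vanishes, gives $s_i^{2^{j+1}}=b_js_i^2+a_js_i$; solving the two linear equations yields $Ra_j=s_1^2s_2^{2^{j+1}}+s_1^{2^{j+1}}s_2^2=Q_j(R)$ and $Rb_j=s_1s_2^{2^{j+1}}+s_1^{2^{j+1}}s_2=Q_j(P)$, which are the two missing identities. (Alternatively: $Q_j(T)$ lies in the image of $\iota_1^*$, namely $\mathbb{Z}/2[P,R,T]$, and any element there of $t$-degree less than $4$ lies in $\mathbb{Z}/2[P,R]$, so the leftover $(Rb_j+Q_j(P))t^2+(Ra_j+Q_j(R))t$ must vanish.) With that supplied your proof closes; what the paper's route buys instead is the explicit recursion $\alpha_{jk}=\alpha_{j-1,k}x_2^{2^{j-1}}+\alpha_{j-2,k}x_3^{2^{j-1}}$ in $j$. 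Incidentally, your computation forces $\alpha_{10}=1$; the paper's ``$\alpha_{10}=0$'' is a typo, since $Q_1(x_3)=x_3^2$ requires $\alpha_{10}=1$.
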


\begin{proof}
For the sake of notational simplicity, by abuse of notation, 
we denote $\iota_1^*(x_i)$ by $x_i$.
By direct calculation, 
we have 
\begin{align*}
Q_0(x_3)&=0, & Q_1(x_3)&= x_3^2, & Q_2(x_3)&=x_2^2 x_3^2, \\
Q_0(x_4)&=0, & Q_1(x_4)&=x_3x_4, & Q_2(x_4)&= x_3x_4^2+x_2^2x_3x_4.
\end{align*}
We define $\alpha_{10}=0$, $\alpha_{20}=x_2^2$, $\alpha_{21}=1$
and the proposition holds for $i=1, 2$.

For $j\geq 2$, let
\[
\Op_j=Q_j+ x_2^{2^{j-1}}Q_{j-1}+x_3^{2^{j-1}}Q_{j-2}.
\]
It is clear that $\Op_j$ acts as a derivation on both 
$H^{*}(BG_1;\mathbb{Z}/2)$ and $H^{*}(B(\mathbb{Z}/2)^3;\mathbb{Z}/2)$
and 
\[
\Op_2(s_1)=\Op_2(s_2)=0, \quad \Op_2(t)=x_4^2.
\]
It is also easy to verify that 
\[
\Op_j(s_1)=(\Op_2(s_1))^{2^{j-2}}, \quad 
\Op_j(s_2)=(\Op_2(s_2))^{2^{j-2}}, \quad 
\Op_j(t)=(\Op_2(t))^{2^{j-2}},
\]
for $j\geq 2$.
Hence, for $j\geq 2$, in $H^{*}(B(\mathbb{Z}/2)^3;\mathbb{Z}/2)$, we have 
\[
\Op_j(s_1)=\Op_j(s_2)=0 \quad \mbox{and} \quad 
\Op_j (t)=x_4^{2^{j-1}}.
\]
Since
$
\Op_j
$
is a derivation, we have
\[
\Op_j(x_3)=0, \quad \Op_j(x_4)=x_3 \Op_j(t)=x_3 x_4^{2^{j-1}}.
\]
Therefore, we have 
\begin{align*}
Q_j (x_3)&= x_2^{2^{j-1}} Q_{j-1}x_3+x_3^{2^{j-1}} Q_{j-2} x_3, 
\\
Q_j (x_4) & = x_3 x_{4}^{2^{j-1}}+x_2^{2^{j-1}} Q_{j-1}x_4+x_3^{2^{j-1}} Q_{j-2} x_4.
\end{align*}
We prove the proposition by  induction on $j$.
Assume that $j\geq 3$ and  the proposition holds for $j-1$ and $j-2$.
Then, 
we have
\begin{align*}
Q_j(x_3)&=
x_{2}^{2^{j-1}} Q_{j-1}(x_3)+ 
x_{3}^{2^{j-1}} Q_{j-2}(x_3)
\\
&= x_3 \left( 
\alpha_{j-1,0} x_2^{2^{j-1}}  + \alpha_{j-2,0}x_3^{2^{j-1}} \right),
\end{align*}
and
\begin{align*}
Q_j(x_4)&=x_3 x_4^{2^{j-1}} + 
x_{2}^{2^{j-1}} Q_{j-1}(x_4)+ 
x_{3}^{2^{j-1}} Q_{j-2}(x_4)
\\
&
=
x_3 \left( x_4^{2^{j-1}}+
\alpha_{j-1,j-2} x_2^{2^{j-1}} x_4^{2^{j-2}} + 
\sum_{k=0}^{j-3} (\alpha_{j-1,k} x_2^{2^{j-1}} +\alpha_{j-2,k} x_3^{2^{j-1}}) x_4^{2^k}  \right).
\end{align*}
Hence, we have 
\begin{align*}
\alpha_{j,j-1}&=1, \\
\alpha_{j,j-2}& =\alpha_{j-1,j-2} x_2^{2^{j-1}},  \\
\alpha_{j,k}&=\alpha_{j-1,k} x_2^{2^{j-1}}  + \alpha_{j-2,k}x_3^{2^{j-1}}
\end{align*}
 for $0\leq k <j-2$
as desired.
\end{proof}

\section{Lemmas} \label{section:3}

Let $M_n=\mathbb{Z}/2[x_2^2, x_3, x_{41}, \dots, x_{4n}]$
 and we regard $M_n$ as a subalgebra of the mod $2$ cohomology ring of 
 $BG_n$ in Proposition~\ref{proposition:2.1}.
 It is  closed under the action of Milnor operations $Q_j$.
In this section, we define a homomorphism $\phi_r\colon  M_n \to M_n$.
We use this $\phi_r$ in the statement of our main result, Proposition~\ref{proposition:4.2}. 
Then, we prove Lemmas~\ref{lemma:3.2} and \ref{lemma:3.3} on this homomorphism $\phi_r$. 
We use Lemma~\ref{lemma:3.2} in the proof of Theorem~\ref{theorem:1.4} in Section~\ref{section:4} and 
Lemma~\ref{lemma:3.3} in the proof of Proposition~\ref{proposition:4.2} in Section~\ref{section:5},
respectively.
We do not use Lemmas~\ref{lemma:3.2} and \ref{lemma:3.3} elsewhere.

We set up some notations.
For a ring $R$ and  a finite set $\{ y\}$, we write $R\{ y\}$ for the free $R$-module with the basis $\{ y\}$.
Let 
\begin{align*}
M_n&=\mathbb{Z}/2[x_2^2, x_3, x_{41}, \dots, x_{4n}], 
\\
L_n&=\mathbb{Z}/2[x_2^2, x_3, x_{41}^2, \dots, x_{4n}^2], 
\end{align*}
For a finite set $I=\{ i_1, \dots, i_{\ell} \}$ of positive integers such that $1\leq i_1 < \cdots < i_{\ell} \leq n$, 
we write $x_I$ for $x_{4 i_1}\cdots x_{4i_\ell}$ and $\ell(I)$ for $\ell\geq 1$.
It is clear that $M_n$ is a free $L_n$-module with the basis $\{ 1, x_{I} \}$.
Let us denote by $M_{n,i}$ the submodule spanned by
$\{ x_{I}\;|\; \ell(I)=i\}$ for $i\geq1$. We write  $M_{n,0}$ for $L_n\{ 1\}\subset M_{n}$.
For $j\geq 2$, we define  derivations
$\partial_i$ by 
\begin{align*}
\partial_j(x_3) &=0, \\
\partial_j(x_{4i})&= \sum_{k=1}^{j-1} \alpha_{jk } x_{4i}^{2^k},
\end{align*}
where the coefficients $\alpha_{jk}$ are those in 
Proposition~\ref{proposition:2.3}.
From this definition, it is clear that 
\[
Q_j(x_3x_{4i})=x_3^2 \partial_j(x_{4i}), \quad Q_j(x_{4i_{1}}x_{4i_{2}})=x_3 \partial_j(x_{4i_{1}}x_{4i_{2}}).
\]
Moreover, $\partial_j$ is an $L_n$-module homomorphism from $M_{n,i+1}$ to $M_{n,i}$.
 We define an $L_n$-module homomorphism $\phi_r:M_{n,i+r-1}\to M_{n, i}$ by 
 \[
 \phi_r(x)=\partial_2\cdots \partial_r(x) \in M_{n, i},
 \]
where $x\in M_{n,i+r-1}$. The following proposition is immediate from the definition of $\phi_r$
and the fact that 
\[
Q_j(x_3x_{4i})=x_3^2\partial_j(x_{4i}), \quad Q_j(x_{4i_{1}}x_{4i_{2}})=x_3 \partial_j(x_{4i_{1}}x_{4i_{2}}).
\]


\begin{proposition}
\label{proposition:3.1}
Suppose that $\ell(I)-r+1+s$ is even and $s\geq 0$.
Then, we have
\[
Q_{r+1} (\phi_r(x_I)x_3^s)=\phi_{r+1}(x_I)x_3^{s+1}.
\]
\end{proposition}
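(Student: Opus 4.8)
The plan is to realize $Q_{r+1}$ as an explicit derivation of $M_n$ and to read off from it a single operator identity that makes the cancellation transparent. From Proposition~\ref{proposition:2.3}, applied to each generator $x_{4i}=\pi_i^*(x_4)$ and to $x_3$, I would first split off the $k=0$ summand to obtain
\[
Q_{r+1}(x_3)=\alpha_{r+1,0}x_3^2,\qquad
Q_{r+1}(x_{4i})=\alpha_{r+1,0}x_3x_{4i}+x_3\,\partial_{r+1}(x_{4i}),
\]
since $\partial_{r+1}(x_{4i})=\sum_{k=1}^{r}\alpha_{r+1,k}x_{4i}^{2^k}$ is precisely the $k\geq 1$ part. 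Because $Q_{r+1}$ is a derivation and $Q_{r+1}(x_2^2)=0$, writing $\Theta=\sum_i x_{4i}\,\partial/\partial x_{4i}$ for the $x_4$-degree operator and $\delta=\partial/\partial x_3$, this yields the operator identity
\[
Q_{r+1}=\alpha_{r+1,0}x_3^2\,\delta+\alpha_{r+1,0}x_3\,\Theta+x_3\,\partial_{r+1}
\]
on all of $M_n$. The two sample identities recorded just before the proposition are exactly the $\ell=1$ and $\ell=2$ instances of the resulting formula $Q_{r+1}(x_J)=\ell(J)\alpha_{r+1,0}x_3x_J+x_3\partial_{r+1}(x_J)$, in which the $\alpha_{r+1,0}$-term drops out for even $\ell(J)$.

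Next I would evaluate this identity on $y=\phi_r(x_I)x_3^s$, using three facts. First, $\phi_r(x_I)=\partial_2\cdots\partial_r(x_I)$ lies in $\mathbb{Z}/2[x_2^2,x_3^2,x_{41},\dots,x_{4n}]$, because each $\partial_j$ only introduces coefficients $\alpha_{jk}\in\mathbb{Z}/2[x_2^2,x_3^2]$ and even powers $x_{4i}^{2^k}$; hence $\phi_r(x_I)$ involves $x_3$ only through $x_3^2$, so $\delta(\phi_r(x_I))=0$. Second, since $\phi_r(x_I)\in M_{n,\ell(I)-r+1}$, every monomial in it has $x_4$-degree $\equiv \ell(I)-r+1 \pmod 2$, whence $\Theta(\phi_r(x_I))=(\ell(I)-r+1)\phi_r(x_I)$ in $\mathbb{Z}/2$-coefficients. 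Third, the derivations $\partial_j$ commute pairwise: their commutator is a derivation vanishing on $x_2^2$ and $x_3$, while on $x_{4i}$ one computes $\partial_{j}\partial_{j'}(x_{4i})=\sum_k\alpha_{j'k}\,\partial_j(x_{4i}^{2^k})=0$ in characteristic $2$, using $L_n$-linearity of $\partial_j$ and $\alpha_{j'k}\in L_n$. Consequently $\partial_{r+1}(\phi_r(x_I))=\phi_{r+1}(x_I)$.

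Combining these, the $x_3\partial_{r+1}$ summand contributes $x_3\,\partial_{r+1}(\phi_r(x_I)x_3^s)=\phi_{r+1}(x_I)x_3^{s+1}$, which is the desired right-hand side. The remaining two summands collect, using $\delta(\phi_r(x_I)x_3^s)=s\,\phi_r(x_I)x_3^{s-1}$ together with the degree count above, into
\[
\bigl(s+\ell(I)-r+1\bigr)\,\alpha_{r+1,0}\,\phi_r(x_I)\,x_3^{s+1},
\]
whose coefficient is $s+\ell(I)-r+1\equiv 0 \pmod 2$ by hypothesis, so this term vanishes. This gives $Q_{r+1}(\phi_r(x_I)x_3^s)=\phi_{r+1}(x_I)x_3^{s+1}$.

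I expect the main obstacle to be the derivation-level bookkeeping: correctly isolating the operator identity for $Q_{r+1}$ — in particular the $\Theta$-term that emerges from the $k=0$ part of Proposition~\ref{proposition:2.3} — and then tracking the two parity contributions, the $x_3$-weight $s$ and the $x_4$-length $\ell(I)-r+1$, so that the hypothesis that $\ell(I)-r+1+s$ is even is seen to kill exactly the unwanted term. The facts $\delta(\phi_r(x_I))=0$ and the commutativity of the $\partial_j$ are the two structural inputs that make the argument go through, and both rely essentially on working in characteristic $2$.
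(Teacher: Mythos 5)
Your proof is correct, and it is essentially the paper's argument made explicit: the paper declares the proposition immediate from the derivation property of $Q_{r+1}$ together with the two sample identities $Q_j(x_3x_{4i})=x_3^2\partial_j(x_{4i})$ and $Q_j(x_{4i_1}x_{4i_2})=x_3\partial_j(x_{4i_1}x_{4i_2})$, which encode exactly the pairwise cancellation of the $\alpha_{r+1,0}$-terms that your parity count $s+\ell(I)-r+1\equiv 0$ formalizes. Your operator identity for $Q_{r+1}$, the observation that $\delta(\phi_r(x_I))=0$, and the commutativity of the $\partial_j$ (which the paper also uses, via $\partial_{j_1}\partial_{j_2}(x_{4i})=0$, in the proof of Lemma~3.2) supply precisely the details the paper leaves unstated.
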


Now, we state and prove Lemma~\ref{ lemma:3.2}. We use Lemma~\ref{ lemma:3.2} in 
Section~\ref{section:4} in the proof of Theorem~\ref{theorem:1.4}.


\begin{lemma}
\label{lemma:3.2}
Let us define the polynomial $e_{k}(t_1, \dots, t_{k})$ by 
\[
e_k(t_1, \dots, t_{k})= \begin{vmatrix}
t_1 &  \cdots & t_k \\
t_1^2 & \cdots & t_k^2 \\
\vdots & & \vdots \\
t_1^{2^{k-1}} & \cdots & t_k^{2^{k-1}}
\end{vmatrix}.
\]
Then, in $M_n$, for a finite set 
 $I=\{ i_1, \dots, i_\ell\}$ of positive integers such that $1\leq i_1<\cdots <i_\ell\leq n$, 
we have
\label{ lemma:3.2}
\begin{align*}
\phi_\ell (x_I) &=e_\ell(x_{4i_1}, \dots, x_{4i_{\ell}}) \not=0,
\\
\phi_{\ell+1}(x_I)&=e_\ell(x_{4i_1}, \dots, x_{4i_{\ell}})^2 \not=0,
\\
\phi_{\ell+2} (x_I)&=0.
\end{align*}
\end{lemma}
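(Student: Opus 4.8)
The plan is to establish all three identities simultaneously by induction on $\ell=\ell(I)$, writing $y_p=x_{4i_p}$ throughout and exploiting that each $\partial_j$ is $L_n$-linear. The third identity, $\phi_{\ell+2}(x_I)=0$, I would dispose of first, as it is purely a matter of the filtration degree: since $\partial_j$ carries $M_{n,i}$ into $M_{n,i-1}$ and annihilates $M_{n,0}=L_n$ (being $L_n$-linear with $\partial_j(1)=0$), the composite $\phi_{\ell+2}=\partial_2\cdots\partial_{\ell+2}$ is a string of $\ell+1$ derivations and so sends $M_{n,\ell}$ into $M_{n,-1}=0$. The real content is therefore the first two identities.

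The engine of the induction is the factorization $\phi_r=\phi_{r-1}\circ\partial_r$. Because $\partial_j$ is a derivation with $\partial_j(x_3)=0$ and $\partial_j(x_{4i}^2)=0$, on the squarefree monomial $x_I$ the Leibniz rule gives $\partial_r(x_I)=\sum_{p}\partial_r(y_p)\,x_{I\setminus\{i_p\}}$, where each coefficient $\partial_r(y_p)=\sum_{k=1}^{r-1}\alpha_{rk}y_p^{2^k}$ lies in $L_n$ (note $\alpha_{rk}\in\mathbb{Z}/2[x_2^2,x_3^2]\subseteq L_n$ and $y_p^{2^k}\in L_n$ for $k\ge 1$). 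Since $\phi_{r-1}$ is $L_n$-linear, these coefficients pull straight out, so that
\[
\phi_\ell(x_I)=\sum_{p}\partial_\ell(y_p)\,\phi_{\ell-1}(x_{I\setminus\{i_p\}}),\qquad \phi_{\ell+1}(x_I)=\sum_{p}\partial_{\ell+1}(y_p)\,\phi_{\ell}(x_{I\setminus\{i_p\}}),
\]
into which I substitute the inductive hypotheses $\phi_{\ell-1}(x_{I\setminus\{i_p\}})=e_{\ell-1}(\dots\widehat{y_p}\dots)$ and $\phi_{\ell}(x_{I\setminus\{i_p\}})=e_{\ell-1}(\dots\widehat{y_p}\dots)^2$.

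The point is then a pair of identities for the Moore determinant $e_\ell$. Expanding $\det(y_p^{2^{q-1}})$ along a row of exponent $2^k$, and using that a determinant with a repeated row vanishes, I get $\sum_p y_p^{2^k}e_{\ell-1}(\dots\widehat{y_p}\dots)=0$ for $0\le k\le\ell-2$ and $\sum_p y_p^{2^{\ell-1}}e_{\ell-1}(\dots\widehat{y_p}\dots)=e_\ell$; applying the Frobenius $(\sum a)^2=\sum a^2$ shifts all exponents up by one and yields the squared analogues $\sum_p y_p^{2^k}e_{\ell-1}(\dots)^2=0$ for $1\le k\le \ell-1$ and $=e_\ell^2$ for $k=\ell$. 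Feeding these into the two displayed sums, every $\alpha_{rk}$ with $k<r-1$ gets multiplied by a vanishing sum, so only the top term survives; since $\alpha_{r,r-1}=1$ by Proposition~\ref{proposition:2.3}, I obtain exactly $\phi_\ell(x_I)=e_\ell$ and $\phi_{\ell+1}(x_I)=e_\ell^2$. The base case $\ell=1$ is immediate from $\phi_1=\mathrm{id}$ and $\partial_2(x_{4i})=x_{4i}^2$.

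For the non-vanishing, $M_n$ is a polynomial ring over $\mathbb{Z}/2$ and hence an integral domain, while $e_\ell(y_1,\dots,y_\ell)$ is a nonzero polynomial in the distinct indeterminates $y_1,\dots,y_\ell$ (the identity permutation contributes the monomial $y_1y_2^2\cdots y_\ell^{2^{\ell-1}}$ with coefficient $1$), so $e_\ell\ne 0$ and $e_\ell^2\ne 0$. The step I expect to demand the most care is matching the exponent ranges in the two determinant identities against the index range $1\le k\le r-1$ appearing in $\partial_r(y_p)$, and checking that the Frobenius-squared version lines up so that again only the coefficient $\alpha_{r,r-1}=1$ contributes; the underlying mechanism, however, is nothing more than the repeated-row vanishing of the Moore determinant.
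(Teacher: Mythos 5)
Your proof is correct, but it is organized differently from the paper's. The paper computes $\phi_\ell(x_I)=\partial_2\cdots\partial_\ell(x_I)$ in one shot: using $\partial_{j_1}\partial_{j_2}(x_{4i})=0$ it expands the whole string of derivations as a sum over permutations, $\sum_{\tau\in S_\ell}\prod_p\partial_{\tau(p)}(x_{4i_p})$ with the convention $\partial_1=\mathrm{id}$, recognizes this as $\det\bigl(\partial_j(x_{4i_p})\bigr)$, and identifies that determinant with the Moore determinant $e_\ell$ (implicitly by row operations, since $\partial_j(x_{4i})=x_{4i}^{2^{j-1}}+(\text{lower rows})$ with $\alpha_{j,j-1}=1$); the statement $\phi_{\ell+2}(x_I)=0$ is then obtained as $\partial_{\ell+2}(e_\ell^2)=0$ because a derivation kills squares in characteristic $2$. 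You instead peel off one derivation at a time via $\phi_r=\phi_{r-1}\circ\partial_r$ and induct on $\ell$, replacing the row-reduction step by the explicit Laplace-expansion identities $\sum_p y_p^{2^k}e_{\ell-1}(\dots\widehat{y_p}\dots)=0$ for $k\le\ell-2$ and $=e_\ell$ for $k=\ell-1$ (and their Frobenius squares), and you kill $\phi_{\ell+2}$ by the filtration count $\partial_j(M_{n,i})\subset M_{n,i-1}$, $\partial_j(M_{n,0})=0$. Your exponent bookkeeping is right: in $\partial_r(y_p)=\sum_{k=1}^{r-1}\alpha_{rk}y_p^{2^k}$ only $k=r-1$ survives in each of the two cases, and $\alpha_{r,r-1}=1$. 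What your route buys is that the passage from $\det(\partial_j(y_p))$ to the Moore determinant — which the paper leaves implicit — is made fully explicit through the repeated-row vanishing; what it costs is the need to carry both the unsquared and squared identities through the induction simultaneously. Both arguments are sound, and your filtration argument for the third identity is if anything cleaner than the paper's.
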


\begin{proof}
Immediately from the definition of $\partial_j$, we  have $\partial_{j_1}\partial_{j_2} (x_{4i})=0$. Let $S_{\ell}$ be the set of permutations of 
$\{ 1, \dots,\ell\}$.
Let $\partial_1(x_{4i})=x_{4i}$. Then, we have
\begin{align*}
\partial_2 \cdots \partial_\ell (x_I)
& = \sum_{\tau\in S_\ell} (\partial_{\tau(1)}  (x_{4i_1}))\cdots (\partial_{\tau(\ell)}(x_{4i_\ell}))
\\
&= \begin{vmatrix} \partial_1(x_{4i_1}) & \cdots & \partial_1(x_{4i_{\ell}}) 
\\
\vdots & & \vdots \\
\partial_\ell(x_{4i_1}) & \cdots & \partial_\ell( x_{4i_\ell}) 
\end{vmatrix}
\\
&=
\begin{vmatrix}
x_{4i_1} &  \cdots &  x_{4i_\ell} \\
x_{4i_1}^2 & \cdots &  x_{4i_\ell}^2 \\
\vdots & & \vdots \\
x_{4i_1}^{2^{\ell-1}} & \cdots &  x_{4i_\ell}^{2^{\ell-1}}
\end{vmatrix}
\\
 & =e_{\ell}(x_{4i_1}, \dots, x_{4i_\ell})
\end{align*}
as desired.
Similarly, we have 
\[
\partial_2\cdots \partial_{\ell+1} (x_I)=e_{\ell}(x_{4i_1}, \dots, x_{4i_\ell})^2.
\]
Therefore, we have 
\[
\partial_2\cdots \partial_{\ell+2} (x_I)=\partial_{\ell+2}(e_{\ell}(x_{4i_{1}}, \dots, x_{4i_{\ell}})^2)=0. \qedhere
\]
\end{proof}

We end this section by  stating and proving 
Lemma~\ref{lemma:3.3}. We use Lemma~\ref{lemma:3.3} in the computation of $E_{2^{r+2}}$-term 
in Section~\ref{section:5}.


\begin{lemma}\label{lemma:3.3}
Let $s$ be a  non-negative integer and $r$ an integer greater than or equal to $2$.
Let $x \in M_{n, \ell-r+1}$ be a linear combination of $\phi_r(x_I)x_3^s$, where 
$\ell(I)=\ell$.
Suppose that $\ell-r+1+s$ is even.
Suppose that $Q_{r+1}(x)=0$.
Then $x$
  is a linear combination of $\phi_{r+1}(x_J)x_3^{s-1}$, where $\ell(J)=\ell+1$.
Moreover, if $s\geq 1$, there exists $y\in M_n$,   which is a linear combination of 
$\phi_r (x_{J}x_3^{s-1})$, such that 
$Q_{r+1}(y)=x$.
\end{lemma}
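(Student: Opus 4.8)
The plan is to first strip away the powers of $x_3$, reducing the hypothesis $Q_{r+1}(x)=0$ to a purely $\phi$-theoretic relation, and then to read that relation as a statement about the homology of a square-zero derivation. Write $x=\sum_{\ell(I)=\ell}c_I\,\phi_r(x_I)\,x_3^{s}$, where the coefficients $c_I$ lie in the subring $\mathbb{Z}/2[x_2^2,x_{41}^2,\dots,x_{4n}^2]$, on which $Q_{r+1}$ vanishes. Then $Q_{r+1}$ is linear over these coefficients, and Proposition~\ref{proposition:3.1}---whose parity requirement is exactly our hypothesis that $\ell-r+1+s$ is even---gives $Q_{r+1}(x)=\sum_I c_I\,\phi_{r+1}(x_I)\,x_3^{s+1}$. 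Since $x_3$ is not a zero divisor in $M_n$, the equation $Q_{r+1}(x)=0$ is equivalent to $\sum_I c_I\,\phi_{r+1}(x_I)=0$. Thus the whole lemma reduces to showing that such a relation forces the element $w:=\sum_I c_I\,\phi_r(x_I)$ to be a combination of the $\phi_{r+1}(x_J)$ with $\ell(J)=\ell+1$.

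I would then bring in the derivation structure. The identity $\partial_{j_1}\partial_{j_2}(x_{4i})=0$ from the proof of Lemma~\ref{lemma:3.2}, together with $\partial_j(x_3)=0$, shows that $\partial_2,\partial_3,\dots$ are commuting derivations each squaring to zero; in particular $\phi_{r+1}=\partial_{r+1}\circ\phi_r$. Setting $\tilde x=\sum_I c_I x_I\in M_{n,\ell}$, so that $w=\phi_r(\tilde x)$, the relation from the previous step becomes $\partial_{r+1}(w)=0$, i.e. $w\in\ker(\partial_{r+1}|_{M_{n,\ell-r+1}})$, while the target statement is that $w\in\partial_{r+1}\big(\phi_r(M_{n,\ell+1})\big)=\langle\,\phi_{r+1}(x_J):\ell(J)=\ell+1\,\rangle$. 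Everything is now a question about the homology of the single square-zero derivation $\partial_{r+1}$ on the square-free grading $M_{n,\bullet}$.

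To compute that homology I would use the K\"unneth theorem for $\partial_{r+1}$. Because $\partial_{r+1}$ is $L_n$-linear and given on generators by $\partial_{r+1}(x_{4i})=\sum_{k=1}^{r}\alpha_{r+1,k}\,x_{4i}^{2^{k}}$, a polynomial whose top term $x_{4i}^{2^{r}}$ is monic, the one-variable homology is $\mathbb{Z}/2[x_{4i}^2]/(\partial_{r+1}x_{4i})$ and sits entirely in square-free degree $0$. Hence $(M_{n,\bullet},\partial_{r+1})$ is exact in every positive square-free degree, so whenever $\ell-r+1\ge 1$ one may solve $w=\partial_{r+1}(u)$. The step I expect to be the real obstacle is upgrading this abstract boundary $u$ to one of the form $u=\phi_r(v)$ with $v\in M_{n,\ell+1}$, so that $w=\phi_{r+1}(v)$ is genuinely a combination of the $\phi_{r+1}(x_J)$; this compatibility of the $\partial_{r+1}$-boundary with $\phi_r$ is where the explicit Moore-determinant formulas of Lemma~\ref{lemma:3.2} should be decisive, via an induction on the number of variables that splits $M_n=M_{n-1}[x_{4n}]$ and peels off $x_{4n}$. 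The degenerate square-free degree $\ell-r+1=0$, where the homology of $\partial_{r+1}$ does not vanish, would have to be examined on its own.

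Granting $w=\sum_J d_J\,\phi_{r+1}(x_J)$ with $\ell(J)=\ell+1$, both conclusions drop out. For the first, $x=x_3^{s}w=\sum_J (d_J x_3)\,\phi_{r+1}(x_J)\,x_3^{s-1}$ exhibits $x$ as a combination of the $\phi_{r+1}(x_J)x_3^{s-1}$. For the second, when $s\ge 1$ Proposition~\ref{proposition:3.1} run in reverse shows that $y=\sum_J d_J\,\phi_r(x_J)\,x_3^{s-1}$ satisfies $Q_{r+1}(y)=\sum_J d_J\,\phi_{r+1}(x_J)\,x_3^{s}=x$, as required.
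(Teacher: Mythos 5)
Your opening reduction is sound and coincides with the paper's: the parity hypothesis is exactly what lets Proposition~\ref{proposition:3.1} convert $Q_{r+1}(x)=0$ into $\sum_I c_I\,\phi_{r+1}(x_I)=0$, i.e.\ $\partial_{r+1}(w)=0$ for $w=\sum_I c_I\,\phi_r(x_I)$, and your last paragraph correctly converts the desired statement about $w$ back into both assertions of the lemma. The problem is the middle. You use only the fact that $w$ is a $\partial_{r+1}$-cycle, compute the homology of the single square-zero derivation $\partial_{r+1}$ to produce some $u$ with $w=\partial_{r+1}(u)$, and then defer the ``upgrade'' of $u$ to an element of the image of $\phi_r$ as ``the real obstacle.'' That obstacle is the entire content of the lemma, and it cannot be overcome from $\partial_{r+1}(w)=0$ alone: since the $\partial_j$ commute and square to zero, $\mathrm{im}\,\phi_{r+1}\subseteq\bigcap_{j=2}^{r+1}\mathrm{Ker}\,\partial_j$, so a $\partial_{r+1}$-cycle not annihilated by $\partial_2,\dots,\partial_r$ can never lie in $\mathrm{im}\,\phi_{r+1}$. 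Concretely, $w=\partial_3(x_{41}x_{42})$ satisfies $\partial_3(w)=0$ and is a $\partial_3$-boundary in square-free degree $1$, yet $\partial_2(w)=x_{41}^4x_{42}^2+x_{41}^2x_{42}^4\neq 0$, so it is not of the form $\phi_3(x_J)$. The information you must exploit, and never do, is that $w\in\mathrm{im}\,\phi_r$ forces $\partial_2(w)=\cdots=\partial_r(w)=0$ in addition to $\partial_{r+1}(w)=0$.

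The paper closes exactly this gap with Proposition~\ref{proposition:3.4}: for $J=\{2,\dots,r+1\}$ the composite $\partial_J$ surjects onto $M_{n,i}\cap\bigcap_{j\in J}\mathrm{Ker}\,\partial_j$ for $i\geq 1$, proved by induction on $n$ by writing $M_n=M_{n-1}[x_{4n}]$ and solving for the coefficients of successive powers of $x_{4n}$; the delicate case ($k$ odd, $i=1$) of that induction genuinely uses relations coming from several different $\partial_j$ at once, which is why working with the joint kernel rather than with $\mathrm{Ker}\,\partial_{r+1}$ alone is unavoidable. Your K\"unneth computation of the homology of $(M_{n,\bullet},\partial_{r+1})$ is correct as far as it goes, but it proves a strictly weaker surjectivity statement than the one required, and your instinct to induct on $n$ by peeling off $x_{4n}$ has to be run on the intersection of all the kernels. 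As written, the proposal does not establish the lemma.
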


For $J=\{ j_1, \dots, j_{r}\}$ such that  $2\leq j_1<\cdots < j_{r}$, we denote $\partial_{j_1}\cdots\partial_{j_{r}}$
by $\partial_J$. 
By Proposition~\ref{proposition:3.1}, Lemma~\ref{lemma:3.3} immediately follows from
 Proposition~\ref{proposition:3.4} below.
Lemma~\ref{lemma:3.3} is the case $J=\{2, \dots, r+1 \}$ in Proposition~\ref{proposition:3.4}.


\begin{proposition}\label{proposition:3.4}
Suppose that $J=\{ j_1, \dots, j_{r}\}$, $2\leq j_1<\cdots <j_{r}$ and that $i\geq 1$.
Then, the homomorphism \[
\partial_J \colon M_{n,i+r-1} \to M_{n,i}\cap \left( \bigcap_{j\in J} \mathrm{Ker}\, \partial_j\right)
\]
is surjective.
\end{proposition}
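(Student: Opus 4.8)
The plan is to recognise each $\partial_j$ as a Koszul differential and then prove surjectivity by induction on the number $n$ of degree four generators, peeling off $x_{4n}$ at each stage. First I would record the structural input. Put $c_{ji}=\partial_j(x_{4i})=\sum_{k=1}^{j-1}\alpha_{jk}x_{4i}^{2^k}\in L_n$; then $\partial_j$ is the $L_n$-linear derivation with $\partial_j(x_3)=0$ and $\partial_j(x_{4i})=c_{ji}$, and on the free $L_n$-basis of squarefree monomials it acts by $\partial_j(x_I)=\sum_{i\in I}c_{ji}x_{I\setminus\{i\}}$. Thus $(M_{n,\bullet},\partial_j)$ is exactly the Koszul complex of the sequence $c_{j1},\dots,c_{jn}$ over $L_n$. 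Because $\alpha_{j,j-1}=1$ and $\alpha_{jk}\in\mathbb{Z}/2[x_2^2,x_3^2]$, each $c_{ji}$ is monic of positive degree in the single variable $x_{4i}^2$, so $c_{j1},\dots,c_{jn}$ is a regular sequence and this Koszul complex is acyclic in positive exterior degrees; this is the case $\lvert J\rvert=1$, which serves as the base of the induction. Checking on the algebra generators $x_3,x_{4i}$ shows the $\partial_j$ pairwise commute and satisfy $\partial_j^2=0$, whence $\partial_j\circ\partial_J=0$ for every $j\in J$; this gives the easy inclusion $\mathrm{Im}\,\partial_J\subseteq M_{n,i}\cap\bigcap_{j\in J}\mathrm{Ker}\,\partial_j$, and the whole problem is the reverse inclusion.

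For the inductive step I would write each element of $M_{n,\bullet}$ uniquely as $a+b\,x_{4n}$ with $a,b$ free of $x_{4n}$, and let $\partial_j^{\circ}$ be the corresponding differential in $x_{41},\dots,x_{4,n-1}$, so that $\partial_j(a+bx_{4n})=(\partial_j^{\circ}a+c_{jn}b)+(\partial_j^{\circ}b)x_{4n}$. Composing these commuting triangular operators yields, for $w=w_0+w_1x_{4n}$,
\[
\partial_J(w)=\Big(\partial_J^{\circ}w_0+\sum_{j_0\in J}c_{j_0 n}\,\partial_{J\setminus\{j_0\}}^{\circ}w_1\Big)+\big(\partial_J^{\circ}w_1\big)x_{4n}.
\]
Given a target $z=z_0+z_1x_{4n}$ in $M_{n,i}\cap\bigcap_{j\in J}\mathrm{Ker}\,\partial_j$, the membership in the kernels says $\partial_j^{\circ}z_1=0$ and $\partial_j^{\circ}z_0=c_{jn}z_1$ for all $j\in J$. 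When $i\ge 2$ I would first solve $\partial_J^{\circ}w_1=z_1$ by the inductive hypothesis in $n-1$ variables (the exterior degree of $z_1$ is $i-1\ge 1$), and then solve $\partial_J^{\circ}w_0=z_0+\sum_{j_0}c_{j_0 n}\partial_{J\setminus\{j_0\}}^{\circ}w_1$. The crucial point is that the right-hand side here automatically lies in $\bigcap_{j\in J}\mathrm{Ker}\,\partial_j^{\circ}$: applying $\partial_{j'}^{\circ}$ kills every summand with $j_0\ne j'$ (since then $\partial_{j'}^{\circ}$ occurs twice), while the remaining term equals $c_{j'n}\partial_J^{\circ}w_1=c_{j'n}z_1=\partial_{j'}^{\circ}z_0$, cancelling $\partial_{j'}^{\circ}z_0$. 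So the inductive hypothesis applies once more and produces $w_0$, and the two equations are precisely the two components of $\partial_J(w)=z$, finishing the reduction to $n-1$ variables.

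The one case the decomposition cannot reach is the lowest exterior degree $i=1$, where solving for $w_1$ would require a (false) degree-zero statement; this is the main obstacle, and it must be proved directly for every $n$. In degree $1$ the assertion is that $\mathrm{Im}\big(\partial_J\colon M_{n,\lvert J\rvert+1}\to M_{n,1}\big)$ equals the syzygy module $\{(a_i)\in L_n^{\,n}:\sum_i a_ic_{ji}=0\ \text{for all}\ j\in J\}$. Since $\partial_J(x_K)=\sum_{t}\det\big(c_{j_s,i_{t'}}\big)_{t'\ne t}\,x_{i_t}$ for $K=\{i_1<\dots<i_{\lvert J\rvert+1}\}$, the image is generated by the Cramer relations among the columns of the matrix $(c_{ji})$, and the claim is the exactness of the associated Buchsbaum--Rim complex; this reduces to the non-vanishing of the maximal minors $\det(c_{j_s,i_t})$ together with their ideal having the expected codimension. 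These minors are generalized Moore determinants: the part of $\det(c_{j_s,i_t})$ of highest total degree in the variables $x_{4i}$ is $\det\big(x_{4i_t}^{2^{j_s-1}}\big)$, and since the exponents $2^{j_s-1}$ are distinct this is a sum of $\lvert J\rvert!$ distinct monomials, hence nonzero, so each minor is a nonzero element of the domain $L_n$; indeed for $J=\{2,\dots,\lvert J\rvert+1\}$ the minor is exactly the square $e_{\lvert J\rvert}^{\,2}$ of Lemma~\ref{lemma:3.2}. I expect this determinantal (Buchsbaum--Rim) input to be the real work, after which the induction above propagates it to all $i\ge 1$.
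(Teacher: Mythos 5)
Your reduction of the case $i\ge 2$ is sound and runs parallel to the paper's own induction on $n$, with one caveat you should make explicit: in the splitting $w=w_0+w_1x_{4n}$ the components are not ``free of $x_{4n}$'' --- their coefficients lie in $L_n=L_{n-1}[x_{4n}^2]$, not in $L_{n-1}$ --- so you are really invoking the inductive hypothesis after the flat base change $L_{n-1}\to L_n$. That is harmless (finite intersections of kernels commute with flat base change), but it is not nothing. The genuine gap is exactly where you locate it, in the case $i=1$, and it is not a deferrable technicality: what you actually verify --- that each maximal minor $\det\bigl(c_{j_s,i_t}\bigr)$ is a nonzero element of the domain $L_n$ --- only shows that the ideal $I_r$ of maximal minors has grade at least $1$, whereas exactness of the Buchsbaum--Rim complex at the spot you need (so that the Cramer relations generate $M_{n,1}\cap\bigcap_{j\in J}\mathrm{Ker}\,\partial_j$) requires $\mathrm{grade}\,I_r\ge n-r+1$. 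For $r<n$ that is a far stronger statement: it means controlling the codimension of the simultaneous vanishing locus of all $r\times r$ minors of the matrix $\bigl(P_{j_s}(x_{4i})\bigr)$ of additive polynomials $P_j(T)=\sum_k\alpha_{jk}T^{2^k}$ with coefficients in $\mathbb{Z}/2[x_2^2,x_3^2]$, including on degenerate loci such as $x_2=x_3=0$ where the matrix becomes a Moore matrix with gapped exponents. Since $i=1$ is the base that feeds every higher $i$ through your induction, the hardest part of the proposition remains unproved.

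For comparison, the paper avoids any determinantal input by expanding $f=f_{k,k}x_{4n}^k+f_{k,k+1}x_{4n}^{k+1}+\cdots$ fully in powers of $x_{4n}$ with coefficients in $M_{n-1}$ (a finer decomposition than your exterior splitting) and killing the lowest term at each step. The only problematic situation is $i=1$ with $k$ odd, where $f_{k,k}\in M_{n-1,0}$ and no inductive statement in exterior degree $0$ exists; there the paper reads off the coefficient of $x_{4n}^{k+2^{j_1-1}-1}$ in the equation $\partial_{j_1}(f)=0$ to write $f_{k,k}=\partial_{j_1}\bigl(f_{k,k+2^{j_1-1}-1}\bigr)$ with $f_{k,k+2^{j_1-1}-1}\in M_{n-1,1}$ killed by $\partial_j$ for $j\in J\setminus\{j_1\}$, and then applies the inductive hypothesis for $n-1$ to the \emph{smaller} index set $J\setminus\{j_1\}$ to produce $g_k$ with $\partial_J(g_k)=f_{k,k}$. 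If you want to complete your argument along your own lines you must either prove the grade bound on $I_r$ for these specific matrices, or refine your $x_{4n}$-splitting to the full $x_{4n}$-adic expansion so that this trick of trading $\partial_{j_1}$ for one exterior degree becomes available.
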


\begin{proof}
We prove the proposition by induction on $n$.
First, we deal with the case $n=1$. 
Suppose that $x \in M_{1,1}$. Then,  $x=\alpha x_{41}$
for some  $\alpha\in L_1$. Since
\[
\partial_j(\alpha x_{41})=\alpha x_{41}^{2^{j-1}},
\]
we have 
\[
M_{1,1}\cap \mathrm{Ker}\, \partial_j=\{0\}.
\]
$M_{1,i}=\{0\}$ for $i\geq 2$. So, the proposition for $n=1$ is obvious.

Next, we assume that $n\geq 2$. 
Suppose that $i\geq 1$ and 
\[
f=f_{k,k}x_{4n}^k+f_{k,k+1}x_{4n}^{k+1}+\cdots \in 
M_{n,i} \cap \left( \bigcap_{j\in J} \mathrm{Ker}\, \partial_j \right),
\]
where $f_{k,\ell}\in M_{n-1}$.
We assume for now that there exists an element $g_k \in M_{n-1}$ such that 
$\partial_J(g_k)=f_{k,k}$.
Then, 
\[
f-\partial_J(g_kx_{4n}^k)=f_{k+1,k+1} x_{4n}^{k+1}+f_{k+1,k+2} x_{4n}^{k+2}+\cdots 
\]
So, continuing this process, 
\[
f-\partial_J(g_kx_{4n}^k+g_{k+1}x_{4n}^{k+1}+\cdots )=0
\]

Now, we complete the proof of the proposition by showing the existence of $g_k$ such that $\partial_J(g_k)=f_{k,k}$. 
If $k$ is odd, or if $k$ is even and $i\geq 2$, then
for $j\in J$, 
the coefficient of $x_{4n}^k$ in $\partial_j(f)$ is 
\[
\partial_j(f_{k,k})=0.
\]
It is in $M_{n-1,i}$ if $k$ is even and in $M_{n-1,i-1}$ if $k$ is odd.
So, if $k$ is even or $i\geq 2$, 
by the inductive hypothesis, 
there exists $g_k$ such that $\partial_J(g_k)=f_{k,k}$.
If $k$ is odd and $i=1$, then 
the coefficient of $x_{4n}^{k+2^{j_1-1}-1}$ is
\[
f_{k,k}+\partial_{j_1} f_{k,k+2^{j_1-1}-1} =0.
\]
For $j\in \{ j_2, \dots, j_r\}$, the coefficient of 
$x_{4n}^{k+2^{j_1-1}-1}$ in $\partial_j(f)$ is
\[
\partial_j(f_{k,k+2^{j_1-1}-1})=0.
\]
Since $f_{k,k+2^{j_1-1}-1}$ is in $M_{n-1, 1}$, by the inductive hypothesis, there exists
$g_k$ such that $\partial_{j_2}\cdots \partial_{j_r}(g_k)=f_{k,k+2^{j_1-1}-1}$. So, we have 
$\partial_{J}(g_k)=f_{k,k}$.
This completes the proof.
\end{proof}

\section{The Atiyah-Hirzebruch spectral sequence}\label{section:4}

In this section,  we describe the Atiyah-Hirzebruch spectral sequence 
for 
\[
\bp{m}^*(BG_n)
\]
 for $m\geq 0$ and $n\geq 1$.

We begin with notation. 
Suppose that $y$ ranges over a finite subset $\{ y\}$ of an $R$-module $M$. Let $R'$ be a subring of $R$.
Then, we denote by 
$R'\langle y \rangle$ the $R'$-submodule of $M$ 
generated by  $\{ y\}$. If  $I\subset R'$ is an annihilator ideal of $\{ y\}$, then 
we write 
$(R'/I ) \langle y \rangle$ for $R'\langle y \rangle$.
If it  is a free $(R'/I)$-module, then we write $(R'/I)\{ y\}$ or $(R'/I) \langle y\rangle$ for it. 
If $\{ y \}$ is the empty set, then we put $(R'/I) \{ y \}=(R'/I)\langle y \rangle=\{0\}$.


Now, we compute 
the integral cohomology $\bp{0}^*(BG_n)=H^{*}(BG_n;\mathbb{Z}_{(2)})$.
We computed the mod $2$ cohomology ring of $BG_n$ in Section~\ref{section:2} and we obtained 
\[
H^{*}(BG_n;\mathbb{Z}/2)=\mathbb{Z}/2[x_2^2,  x_3^2, x_{41}, \dots, x_{4n}] \{ 1, x_2, x_3, x_2 x_3 \}.
\]
The  Bockstein  homomorphism $Q_0$ maps $x_2^2, x_3^2, x_{41}, x_{42}, \dots, x_{4n}$ to zero, so that 
$Q_0$ is a $\mathbb{Z}/2[x_2^2, x_3^2, x_{41}, \dots, x_{4n}]$-module homomorphism. Moreover, since
\[
Q_0(x_2)= x_3, \; Q_0(x_2 x_3) =x_3^2
\]
we have
\begin{align*}
\mathrm{Ker}\, Q_0 & =\mathbb{Z}/2[x_2^2, x_{41}, \dots, x_{4n}] \oplus 
\mathbb{Z}/2[x_2^2, x_3^2, x_{41}, \dots, x_{4n}] \{ x_3, x_3^2  \}, 
\\
\mathrm{Im}\, Q_0 &= \mathbb{Z}/2[x_2^2, x_3^2, x_{41}, \dots, x_{4n}] \{ x_3, x_3^2  \}
\end{align*}
and 
\begin{align*}
\mathrm{Ker}\, Q_0/ \mathrm{Im}\, Q_0 &=\mathbb{Z}/2[x_2^2, x_{41}, \dots, x_{4n}].
\end{align*}
So,  the integral cohomology of $BG_n$ localized at $2$ is 
\[
\mathbb{Z}_{(2)}[x_2^2,  x_{41}, \dots, x_{4n}] \oplus \mathbb{Z}/2[x_2^2, x_3^2, x_{41}, \dots, x_{4n}] \{ x_3, x_3^2 \}.
\]


Next, we describe the $E_2$-term of the Atiyah-Hirzebruch spectral sequence. 
For the sake of notational simplicity, we write $v_0$ for $2$ in the coefficient ring 
$\mathbb{Z}_{(2)}[v_1, \dots, v_m]$
of the truncated Brown-Peterson cohomology $\bp{m}^*(-)$.
Let 
\begin{align*}
D&=\mathbb{Z}_{(2)}[ x_2^2,  x_3^2, x_{41}^2, \dots, x_{4n}^2, v_1, \dots, v_m]/(v_0 x_3^2),
\\
C&=\mathbb{Z}_{(2)}[ x_2^2,   x_{41}^2, \dots, x_{4n}^2, v_1, \dots, v_m].
\end{align*}
We regard $C$ as a subring of $D$.
We write $D_r$ for $D/(v_0, \dots, v_r)$.
We may decompose $D$ to $C\{ 1\} \oplus D_0\{ x_3^2\}$.
As in Section~\ref{section:3}, for a finite set  $I=\{ i_1, \dots, i_\ell \}$ of positive integers $i_1, i_2, \dots, i_\ell$ 
such that $1\leq i_1< \cdots <i_\ell \leq n$, we write $x_I$ for the monomial 
\[
x_{4i_1}\cdots x_{4i_\ell}
\]
and $\ell(I)$ for the integer $\ell\geq 1$.
The number of such sets $I$ is $2^n-1$ and the set $\{ x_{I} \}$ is a finite set consisting 
of $2^n-1$ elements.
Let $y_0$ range over $\{ x_{I} \}$, that is, $\{ y_0\}=\{ x_{I}\}$.
The $E_2$-term of the Atiyah-Hirzebruch spectral sequence for $\bp{m}^*(BG_n)$
is isomorphic to 
\[
D
\{ 1, y_0  \} \oplus D_0 \{ x_3, x_3 y_0 \}=C\{ 1, y_0 \} \oplus D_0\{ x_3, x_3 y_0, x_3^2, x_3^2 y_0\}
\]


Next, we describe the first non-trivial differential $d_3$ and the $E_4$-term.
Let $y_1$, $z_1$ range over 
\begin{align*}
& \{ x_{I} \;|\; \mbox{ $\ell(I)$ is even} \}, 
\\
& \{ x_{I} \;|\; \mbox{ $\ell(I)$ is odd} \},
\end{align*}
respectively.
Then, $\{ y_0\}= \{y_1\} \cup\{z_1\}$. 
Since the Milnor operation $Q_1$ maps $x_3$, $x_{4i}$ to
\[
Q_1( x_3) =  x_3^2, \quad Q_1(x_{4i})=x_3x_{4i}, 
\]
the first non-trivial differential $d_3$ is given by
\begin{align*}
d_3(1) &=0, & d_3(x_3)&= v_1 x_3^2, \\
d_3(y_1) &=0, & d_3(x_3y_1)&=v_1x_3^2 y_1, \\
d_3(z_1) &=v_1x_3 z_1, & d_3(x_3z_1)&=0.
\end{align*}
So, we have
\begin{align*}
\mathrm{Ker}\, d_3&=C\{ 1, y_1, v_0 z_1 \} \oplus D_0\{ x_3^2, x_3^2 y_1, x_3z_1 \},
\\
\mathrm{Im}\, d_3&=D_0 \{  v_1 x_3^2,  v_1 x_3^2 y_1,v_1 x_3 z_1 \} 
\intertext{and}
\mathrm{Ker}\, d_3/\mathrm{Im}\, d_3 &=C \{ 1, y_1,  v_0z_1 \} \oplus D_1\{ x_3^2, x_3^2 y_1,  x_3z_1 \}
.
\end{align*}
One may regard $C\{ 1, y_1, v_0 z_1 \}$, $D_1\{ x_3^2 y_1, x_3 z_1\}$ 
as $C\langle 1, y_1, v_0 y_0 \rangle$, $D_1\langle x_3^2 y_1, x_3z_1\rangle$, respectively. 
Hence, summing up the above, we have the following proposition.


\begin{proposition}\label{proposition:4.1}
We have
\begin{align*}
E_2&=D\{1, y_0 \} \oplus D_0 \{ x_3, x_3 y_0  \},
\\
E_4&
=C\langle 1, y_1, v_0y_0 \rangle \oplus D_1 \langle  x_3^2, x_3^2 y_1, x_3z_1\rangle.
\end{align*}
\end{proposition}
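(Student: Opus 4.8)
The plan is to read both pages off directly: $E_2$ from the integral cohomology computed just above, and $E_4$ from the single relevant differential $d_3$. The argument has three parts: identify $E_2$, compute $d_3$, and pass to $E_4=\mathrm{Ker}\,d_3/\mathrm{Im}\,d_3$.

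First I would fix the $E_2$-term. Since $\pi_*\bp{m}=\mathbb{Z}_{(2)}[v_1,\dots,v_m]$ is $\mathbb{Z}_{(2)}$-free and concentrated in even degrees, we have $E_2=H^*(BG_n;\mathbb{Z}_{(2)})\otimes_{\mathbb{Z}_{(2)}}\mathbb{Z}_{(2)}[v_1,\dots,v_m]$. I would then insert the integral cohomology $\mathbb{Z}_{(2)}[x_2^2,x_{41},\dots,x_{4n}]\oplus\mathbb{Z}/2[x_2^2,x_3^2,x_{41},\dots,x_{4n}]\{x_3,x_3^2\}$ and reorganize each monomial in the $x_{4i}$ as (an even power)$\times$(a square-free factor $x_I$). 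The square-free factors are exactly the generators $y_0$; the even powers are absorbed into the polynomial rings $C$ and $D_0$; and the $2$-torsion of the second summand is precisely the relation $v_0x_3^2=0$ built into $D$. Matching the free summand to $C\{1,y_0\}$ and the torsion summand to $D_0\{x_3,x_3y_0,x_3^2,x_3^2y_0\}$ gives $E_2=D\{1,y_0\}\oplus D_0\{x_3,x_3y_0\}$.

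Next I would compute the first differential. Because the coefficient degrees are all even, every even differential $d_{2r}$ lowers an even coefficient degree to an odd one and hence vanishes; thus $E_3=E_2$ and the first possible differential is $d_3$. For $m\geq1$ this is the standard first $k$-invariant of the truncated theory, $d_3=v_1Q_1$: reduction mod $2$, the Milnor primitive $Q_1$, then multiplication by $v_1$. Using that $Q_1$ is a derivation with $Q_1(x_3)=x_3^2$, $Q_1(x_{4i})=x_3x_{4i}$ and $Q_1(\mathrm{square})=0$, I obtain $Q_1(x_I)=\ell(I)\,x_3x_I$, which vanishes mod $2$ exactly when $\ell(I)$ is even. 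This is the reason to split $\{y_0\}$ into the even-length classes $\{y_1\}$ and the odd-length classes $\{z_1\}$; the Leibniz rule then yields the six displayed values of $d_3$ on generators.

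Finally I would pass to homology by grouping the generators into $d_3$-stable subcomplexes. The paired complexes $D_0\{x_3\}\to D_0\{v_1x_3^2\}$ and $D_0\{x_3y_1\}\to D_0\{v_1x_3^2y_1\}$ contribute $D_1\{x_3^2\}$ and $D_1\{x_3^2y_1\}$, while $C\{1,y_1\}$ consists of permanent cycles. The delicate piece is $C\{z_1\}\oplus D_0\{x_3^2z_1\}\xrightarrow{d_3}D_0\{x_3z_1\}$: the target generator $x_3z_1$ receives $v_1\bar{c}$ from $c\,z_1$ (with $c\in C$ reduced mod $2$) and $v_1x_3^2d$ from $d\,x_3^2z_1$ (with $d\in D_0$). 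The key point is that the image of $C$ (the part of $x_3^2$-degree zero) together with $x_3^2D_0$ (the part of positive $x_3^2$-degree) exhausts $D_0$, so the total image is exactly $D_0\{v_1x_3z_1\}$; the kernel is then $C\{v_0z_1\}\oplus D_0\{x_3z_1\}$ and the homology is $C\{v_0z_1\}\oplus D_1\{x_3z_1\}$. Assembling the pieces and absorbing the redundancy $v_0y_1\in C\{y_1\}$ into the angle-bracket notation gives $E_4=C\langle1,y_1,v_0y_0\rangle\oplus D_1\langle x_3^2,x_3^2y_1,x_3z_1\rangle$. I expect this last subcomplex---the two sources mapping onto $x_3z_1$ and the careful use of $2x_3^2=0$---to be the main obstacle; everything else is bookkeeping once $d_3=v_1Q_1$ is in hand.
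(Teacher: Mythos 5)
Your proposal is correct and takes essentially the same route as the paper: read off $E_2$ from the integral cohomology using the basis $\{1,y_0\}$ over the subring of squares, identify the first differential as $d_3=v_1Q_1$ with the splitting $\{y_0\}=\{y_1\}\cup\{z_1\}$ by parity of $\ell(I)$, and compute $\mathrm{Ker}\,d_3/\mathrm{Im}\,d_3$. The paper records exactly the same kernel and image (with the piece $D\{z_1\}=C\{z_1\}\oplus D_0\{x_3^2z_1\}$ mapping onto $D_0\{v_1x_3z_1\}$ with kernel $C\{v_0z_1\}$), only stating the six values of $d_3$ without spelling out the exhaustion argument you give for the ``delicate piece.''
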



Now, we describe $E_{2^{r+1}}$-terms for $r\geq 2$.
In Section~\ref{section:3}, for $r\geq 2$, we defined $\phi_r(x_I)$ by
\[
\phi_r(x_I)=\partial_2\cdots \partial_r(x_I), 
\]
in 
\[
M_n=\mathbb{Z}/2[x_2^2, x_3, x_{41}, \dots, x_{4n}].
\]
By abuse of notation, we denote an element in $D \{ 1, y_0\}$ representing 
$\phi_r(x_I)$ in $D_0\{1, y_0\}$  by the same symbol $\phi_r(x_I)$.
We also assume that $e_r$, $y_r$, $z_r$ range over the following finite sets
\begin{align*}
&\{ \phi_r(x_{I}) \;|\; \ell(I)-r+1=0 \}, 
\\
&\{ \phi_r(x_{I})\;|\; \ell(I)-r+1>0 \mbox{ and $\ell(I)-r+1$ is even} \}, 
\\
&\{ \phi_r(x_{I})\;|\; \ell(I)-r+1>0 \mbox{ and $\ell(I)-r+1$ is odd} \},
\end{align*}
respectively. We state our main result  as follows:


\begin{proposition}\label{proposition:4.2}
Suppose $r\geq 1$. 
Let $k=\min\{ m, n+1\}$.
For $r\leq k$, the $E_{2^{r+1}}$-term of the Atiyah-Hirzebruch spectral sequence is 
\begin{align*} E_{2^{r+1}} &=
 C\langle 1, y_r, v_sy_s \rangle \oplus D_1/(v_te_t )  \{ x_3^2\} \oplus D_r\langle x_3^2 y_r, x_3 z_r \rangle
\end{align*}
where $s$ ranges over $\{ 0, \dots, r-1\}$ and $t$ ranges over $\{ 2,\dots, r\}$.
Moreover, the spectral sequence collapses at the $E_{2^{k+1}}$-level, that is, $E_\infty=E_{2^{k+1}}$.
\end{proposition}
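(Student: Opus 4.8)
The plan is to prove Proposition~\ref{proposition:4.2} by induction on $r$, computing the differential $d_{2^{r+1}-1}$ on the $E_{2^{r+1}-1}=E_{2^r}$-term and then passing to homology. The base case $r=1$ is Proposition~\ref{proposition:4.1}, which already exhibits $E_4$ in the claimed form (reading $e_1,y_1,z_1$ appropriately). For the inductive step, I would assume the stated description of $E_{2^{r+1}}$ and analyze the next nontrivial differential. The key structural fact is that in the Atiyah-Hirzebruch spectral sequence for $\bp{m}^*$, the differential $d_{2^{r+2}-1}$ is governed by the Milnor operation $Q_{r+1}$ together with multiplication by $v_{r+1}$, so the essential computation reduces to understanding how $Q_{r+1}$ acts on the surviving classes $\phi_r(x_I)x_3^s$. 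This is exactly where Proposition~\ref{proposition:3.1} enters: it tells us $Q_{r+1}(\phi_r(x_I)x_3^s)=\phi_{r+1}(x_I)x_3^{s+1}$ under the parity hypothesis $\ell(I)-r+1+s$ even, which converts the differential computation into bookkeeping about the modules $M_{n,i}$.

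First I would make precise the identification of each summand of $E_{2^{r+1}}$ with its meaning as a permanent-cycle-candidate, and split the classes according to the parity of $\ell(I)-r+1$. The differential $d_{2^{r+2}-1}$ sends $x_3^2\mapsto v_{r+1}Q_{r+1}(x_3^2)$-type terms and, on the generators indexed by $\phi_r(x_I)$, mirrors the pattern from the $d_3$ computation in Section~\ref{section:4}: classes with $\ell(I)-r+1$ even (the $y_r$) together with $x_3^2 y_r$ will behave like the $1$/$x_3$ pair, while classes with $\ell(I)-r+1$ odd (the $z_r$) paired with $x_3 z_r$ carry the nontrivial differential. Using Proposition~\ref{proposition:3.1}, I would write down $d_{2^{r+2}-1}$ explicitly in terms of $\phi_{r+1}$ and $v_{r+1}$, so that its image and kernel become computable as $L_n$-modules.

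The heart of the matter, and the step I expect to be the main obstacle, is computing $\mathrm{Ker}/\mathrm{Im}$ of this differential and matching it with the claimed $E_{2^{r+2}}$. Here Lemma~\ref{lemma:3.3} is the crucial input: it guarantees that a linear combination $x$ of $\phi_r(x_I)x_3^s$ with $Q_{r+1}(x)=0$ is automatically a linear combination of $\phi_{r+1}(x_J)x_3^{s-1}$, and moreover that when $s\geq 1$ such a cycle is a boundary. This is precisely what is needed to show that the homology at $E_{2^{r+2}}$ reorganizes into the same shape with $r$ replaced by $r+1$: the $e_{r+1}$-classes (where $\ell(I)-r=0$, i.e. $\phi_{r+1}(x_I)\neq0$ but $\phi_{r+2}(x_I)=0$ by Lemma~\ref{lemma:3.2}) become the new $v_t e_t$ torsion in the $\{x_3^2\}$ summand, absorbing an extra relation, while the surviving $y_{r+1},z_{r+1}$ and the $v_s y_s$ tower update accordingly. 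The delicate part is tracking the $v_0$-torsion versus $\mathbb{Z}_{(2)}$-free parts correctly across the passage and verifying that the indices $s\in\{0,\dots,r\}$ and $t\in\{2,\dots,r+1\}$ shift as claimed.

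Finally, for the collapse statement I would observe that once $r=k=\min\{m,n+1\}$ is reached there are no further nontrivial differentials available: for $r>m$ the classes $v_{r+1}$ no longer exist in the coefficient ring $\mathbb{Z}_{(2)}[v_1,\dots,v_m]$, so the $Q_{r+1}$-driven differential vanishes, and for $r>n$ Lemma~\ref{lemma:3.2} forces $\phi_{r+1}(x_I)=0$ for all relevant $I$, so again the differential is zero. Hence $E_\infty=E_{2^{k+1}}$, completing the induction and the proof.
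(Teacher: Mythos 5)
Your skeleton matches the paper's: induction on $r$ with Proposition~\ref{proposition:4.1} as the base case, Proposition~\ref{proposition:3.1} and Lemma~\ref{lemma:3.3} to compute the kernel and image of the $Q_{r+1}$-driven differential, and Lemma~\ref{lemma:3.2} to identify the $e$-classes. But there is a genuine gap at the central point. You take as a ``key structural fact'' that $d_{2^{r+2}-1}$ is given by $v_{r+1}Q_{r+1}$ on the surviving classes. This is not a general theorem for the Atiyah--Hirzebruch spectral sequence of $\bp{m}^*$: only the first nontrivial differential admits such a description, and by page $2^{r+2}-1$ the surviving classes are no longer canonically elements of $H^{*}(BG_n;\mathbb{Z}/2)$. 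The paper has to earn this formula: it maps the spectral sequence through $\rho^m_{r+1}$ and $\iota_n^*$ into the Atiyah--Hirzebruch spectral sequence for the Morava $K$-theory $K(r+1)^{*}(B(\mathbb{Z}/2)^{n+2})$, where the first nontrivial differential really is $d_{2^{r+2}-1}(x)=v_{r+1}Q_{r+1}(x)$ (Lemma~\ref{lemma:5.2}), and then uses the injectivity of $\iota_n^*$ (Proposition~\ref{proposition:2.2}) to detect the answer. Even then the detection fails to be complete on the $D_1/(v_te_t)\{x_3^2\}$ summand, so an indeterminacy term $\alpha_I\in\mathrm{Ker}\,\pi^1_r$ appears and must be killed by a separate torsion argument (Lemma~\ref{lemma:5.1}~(3)). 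None of this is in your proposal, and without it the formula for $d_{2^{r+2}-1}$ is an assertion, not a proof.

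A second omission: you pass from $E_{2^{r+1}}$ directly to ``the next nontrivial differential'' without showing that $d_u=0$ for $2^{r+1}\leq u\leq 2^{r+2}-2$. The paper spends a substantial part of Section~\ref{section:5} on exactly this, combining the bidegree constraints of Lemma~\ref{lemma:5.1}~(1) with the injectivity of multiplication by $x_3^2$ on $D_r\langle x_3 z_r\rangle$ and parts (2) and (3) of that lemma. Relatedly, your collapse argument only explains why the $Q_{r+1}$-driven differential vanishes once $r>k$; to conclude $E_\infty=E_{2^{k+1}}$ you must rule out \emph{all} higher differentials, which the paper does by noting that for $k=m$ the relevant target bidegrees vanish because $D_r=\mathbb{Z}/2$, and for $k=n+1$ the $E_{2^{k+1}}$-term has no odd-degree generators since $\{z_k\}=\emptyset$.
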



\begin{example}
\label{example:4.3}
We compute the Brown-Peterson cohomology of $BG_1$.
\\
The case $n=1$, $m\geq 2$:
\[
\begin{array}{lll}
\{ y_1\}=\emptyset, & \{ z_1\}=\{ x_{41} \}, \\
\{ y_2 \}=\emptyset, & \{ z_2\}= \emptyset, & \{ e_2\}=\{ x_{41}^2\}.
\end{array}
\]
\begin{align*}
E_2&=C\{ 1, x_{41}\} \oplus D_0 \{ x_{3}^2, x_{3}^2 x_{41}, x_3, x_3 x_{41} \}\\
E_4&=C\{ 1, v_0 x_{41}\} \oplus D_1 \{ x_3^2\} \oplus D_1\{ x_3 x_{41} \}, \\
E_8& =C\{ 1 , v_0 x_{41} \} \oplus D_1/(v_2 x_{41}^2) \{ x_3^2\}
\end{align*} 
So, $x_{41}$ lifts up to $\bp{0}^{*}(BG_1)$ that corresponds to the $E_2$-term above 
and it does not lift to $\bp{1}^{*}(BG_1)$ that corresponds to the $E_4$-term above.
\end{example}


\begin{example}
\label{example:4.4}
We compute the Brown-Peterson cohomology of $BG_2$.
The case $n=2$, $m\geq 3$:
\[
\begin{array}{lll}
\{ y_1\}=\{ x_{41}x_{42} \}, & \{ z_1\}=\{ x_{41}, x_{42} \}, \\
\{ y_2 \}= \emptyset, &  \{ z_2\}=\{ x_{41}^2x_{42}+x_{41}x_{42}^2  \}, & \{ e_2\}=\{ x_{41}^2, x_{42}^2 \}, \\
\{ y_3\}=\emptyset, & \{ z_3\}=\emptyset, & \{ e_3\}=\{ x_{41}^2x_{42}^4+x_{41}^4x_{42}^2 \}.
\end{array}
\]
\begin{align*}
E_4 & = C\{ 1, x_{41}x_{42}, v_0 x_{41}, v_0x_{42} \}
\\
& \oplus D_1\{ x_{3}^2\} \oplus D_1\{ x_{41}x_{42} x_{3}^2, x_{3}x_{41}, x_{3}x_{42} \}
\\
E_8& =
C\langle 1, v_1 x_{41}x_{42}, v_0 x_{41}x_{42}, v_0 x_{41}, v_0 x_{42}\rangle \\
& \oplus D_1/(v_2x_{41}^2, v_2 x_{42}^2) \{ x_3^2\}  \oplus D_2\{ x_{3} (x_{41}^2x_{42}+x_{41}x_{42}^2)\}
\\
E_{16}&=C\langle 1, v_1 x_{41}x_{42}, v_0x_{41}x_{42}, v_0 x_{41}, v_0 x_{42} \rangle 
\\
& \oplus
D_1/(v_2 x_{41}^2, v_2 x_{42}^2, v_3 (x_{41}^4x_{42}^2+x_{41}^2 x_{42}^4) )\{ x_{3}^2\}
\end{align*}
So, $x_{41}x_{42}$ in $\bp{1}^*(BG_2)$ that corresponds to the $E_4$-term above does not lift up
 to $\bp{2}^{*}(BG_2)$ that corresponds to the $E_8$-term above.
\end{example}


We end this section by proving Theorems~\ref{theorem:1.1} and \ref{theorem:1.4}.
These are immediate from Proposition~\ref{proposition:4.2}.

\begin{proof}[Proof of Theorem~\ref{theorem:1.1}]
Given $m$, let $n=m+1$.
Then, the Atiyah-Hirzebruch spectral sequence for $\bp{m}^{*}(BG_{m+1})$ collapses at the 
$E_{2^{m+1}}$-level and
\[
E_\infty= C\langle 1, y_m, v_sy_s \rangle \oplus D_1/(v_te_t )  \{ x_3^2\} \oplus 
D_m\langle x_3^2 y_m, x_3 z_m \rangle.
\]
Let $u$ be an element in $\bp{m}^*(BG_{m+1})$ representing  the permanent cycle 
\[
\phi_{m}(x_{41}\cdots x_{4,m+1})\in \{ y_{m} \}
\]
 in the Atiyah-Hirzebruch spectral sequence. 
 It is non-zero by Lemma~\ref{ lemma:3.2}.
 Moreover, since the corresponding element in the Atiyah-Hirzebruch spectral sequence for 
 $\bp{m+1}^*(BG_{m+1})$ supports non-trivial differential and so $u$ does not lift up to 
 $\bp{m+1}^*(BG_{m+1})$.
Moreover, the reduction $\rho^m_0(u)$ is non-zero in  
\[
\mathbb{Z}_{(2)}[x_2^2, x_{41}, \dots, x_{4, m+1}] = H^{*}(BG_{m+1}; \mathbb{Z}_{(2)})/(x_3), 
\]
therefore it is non-torsion.
\end{proof}


\begin{proof}[Proof of Theorem~\ref{theorem:1.4}]
By Proposition~\ref{proposition:4.2}, the odd degree part of the $E_\infty$-term is 
\[
D_r\langle x_3z_r\rangle.
\]
However, by definition, 
$\{ z_r\}=\emptyset$ if $r \geq n+1$.
By Lemma~\ref{ lemma:3.2},
$\{ z_r\}$ contains non-zero element $\phi_r(x_{41}\cdots x_{4r})$ for $r\leq n$. Hence, 
$D_r\langle x_3z_r\rangle=\{0\}$ if and only if $r\geq n+1$.
If $m\leq n$, the spectral sequence collapses at the $E_{2^{m+1}}$-level and it contains 
non-zero odd degree elements.
If $m\geq n+1$, then $E_{2^{n+2}}$-term has no non-zero odd degree element. 
So, the spectral sequence collapses at the $E_{2^{n+2}}$-level
and the $E_\infty$-term also does not have any non-zero odd degree element.
\end{proof}

\section{Proof of Proposition~\ref{proposition:4.2}}\label{section:5}

To prove Proposition~\ref{proposition:4.2}, 
we need to study non-trivial differentials in the spectral sequence. To this end, we prove
Lemmas~\ref{lemma:5.1} and  \ref{lemma:5.2} before the proof of Proposition~\ref{proposition:4.2}.

We use the following Lemma~\ref{lemma:5.1} to show that the differential $d_u$ is trivial for 
$2^{r}\leq u \leq 2^{r+1}-2$. 


\begin{lemma}\label{lemma:5.1}
The following statements {\rm (1)}, {\rm (2)} and {\rm (3)} hold:
\begin{itemize}
\item[(1)]
For $2 \leq u < 2^{r+2}-1$, 
$(D_r\langle x_3^2 y_r, x_3 z_r\rangle)^{*,-u+1}=\{ 0\}$.
\item[(2)]
Suppose that $x\in C\langle 1, y_r, v_s y_s \rangle$ and 
$
v_2 x=\cdots =v_r x=0.
$
Then, we have $x=0$.
\item[(3)]
Suppose that $x\in D_1/(v_t e_t)\{ x_3^2\}$, 
$x\equiv 0$ in $D_r/(v_t e_t)\{ x_3^2\}=D_r\{ x_3^2\}$ and  
$
v_2 x=\cdots =v_r x=0.
$
 Then, we have
$x=0$ in $D_1/(v_t e_t)\{ x_3^2\}$.
\end{itemize}
\end{lemma}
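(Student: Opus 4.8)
The plan is to prove the three assertions separately, since each constrains a different summand of $E_{2^{r+1}}$ and they interact only through their common use in killing the intermediate differentials $d_u$ with $2^r\le u\le 2^{r+1}-2$; for (2) and (3) I take $r\ge 2$, the range in which the hypotheses $v_2x=\cdots=v_rx=0$ are non-vacuous and in which the lemma is applied. Part~(1) is a bidegree-gap argument. In $D_r=D/(v_0,\dots,v_r)$ the only generators of negative coefficient degree (the second index of the Atiyah--Hirzebruch bigrading) are $v_{r+1},\dots,v_m$, with $\deg v_j=-2^{j+1}+2\le -2^{r+2}+2$ for $j\ge r+1$, while the module generators $x_3^2y_r$ and $x_3z_r$ affect only the cohomological index. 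Hence every nonzero homogeneous element of $D_r\langle x_3^2y_r,x_3z_r\rangle$ has coefficient degree $0$ or $\le -2^{r+2}+2$. For $2\le u<2^{r+2}-1$ one has $-2^{r+2}+2<-u+1\le -1$, so the coefficient degree $-u+1$ lies strictly inside the empty interval $(-2^{r+2}+2,0)$, and the graded piece vanishes.

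For part~(2) the relevant point is torsion-freeness over the integral domain $C=\mathbb{Z}_{(2)}[x_2^2,x_{41}^2,\dots,x_{4n}^2,v_1,\dots,v_m]$. The summand $C\langle 1,y_r,v_sy_s\rangle$ is a $C$-submodule of the free $C$-module underlying the $x_3$-free part $C\{1,y_0\}$ of $E_2$: each generator is a nonzero $C$-multiple (the scalar being $1$, or $v_s$, or $v_0=2$) of a $C$-linearly independent class. A submodule of a free module over a domain is torsion-free, so multiplication by the nonzero element $v_2\in C$ is injective, and already $v_2x=0$ forces $x=0$; the remaining hypotheses are then superfluous but harmless.

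Part~(3) carries the real content. I would write $R=D_1=\mathbb{Z}/2[x_2^2,x_3^2,x_{41}^2,\dots,x_{4n}^2,v_2,\dots,v_m]$, set $A=\mathbb{Z}/2[x_2^2,x_3^2,x_{41}^2,\dots,x_{4n}^2]$ and $B=A[v_{r+1},\dots,v_m]$, so that $R=B[v_2,\dots,v_r]$. By Lemma~\ref{lemma:3.2} each relation generator $e_t=\phi_t(x_I)$ with $\ell(I)=t-1$ equals $e_{t-1}(x_{4i_1},\dots,x_{4i_{t-1}})^2$ and hence lies in $A\subseteq B$; let $I_t\subseteq B$ be the ideal they generate and $J=\sum_{t=2}^r v_tI_tR$ the relation ideal. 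Lift $x$ to $\tilde x\in R$. Vanishing of $x$ in $D_r=R/(v_2,\dots,v_r)$ gives $\tilde x\in(v_2,\dots,v_r)$, so in the expansion $\tilde x=\sum_{\mathbf d}c_{\mathbf d}v^{\mathbf d}$ over monomials $v^{\mathbf d}$ in $v_2,\dots,v_r$ (with $c_{\mathbf d}\in B$) every occurring $\mathbf d$ is nonzero. The key step is a coefficient-wise description of $J$: because each $e_t$ is free of the variables $v_2,\dots,v_r$, an element $\sum_{\mathbf d}c_{\mathbf d}v^{\mathbf d}$ lies in $J$ if and only if $c_{\mathbf d}\in I_{\mathbf d}:=\sum_{t:\,d_t\ge 1}I_t$ for every $\mathbf d$. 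Applying this to $v_s\tilde x$ translates the hypothesis $v_sx=0$ into $c_{\mathbf d}\in I_s+I_{\mathbf d}$ for all $\mathbf d$. For each $\mathbf d$ occurring in $\tilde x$, choose $s\in\{2,\dots,r\}$ with $d_s\ge 1$; then $I_s\subseteq I_{\mathbf d}$, so $I_s+I_{\mathbf d}=I_{\mathbf d}$ and $c_{\mathbf d}\in I_{\mathbf d}$. As this holds for all $\mathbf d$, the criterion yields $\tilde x\in J$, i.e.\ $x=0$ in $D_1/(v_te_t)\{x_3^2\}$; note that all of $v_2x=\cdots=v_rx=0$ get used, since the admissible $s$ depends on $\mathbf d$.

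I expect the main obstacle to be the bookkeeping in part~(3): the relation ideal $J$ mixes the coefficient variables $v_t$ with the polynomial classes $e_t$, and directly one controls only $v_s\tilde x$ rather than $\tilde x$ itself. Establishing the coefficient-wise membership criterion for $J$ — which rests on the $e_t$ being free of $v_2,\dots,v_r$ — is exactly what converts the hypotheses into the pointwise conditions $c_{\mathbf d}\in I_s+I_{\mathbf d}$, after which the containment $I_s\subseteq I_{\mathbf d}$ closes the argument at once. Parts~(1) and~(2), by contrast, are immediate once the bigrading gap and the torsion-freeness of $C\langle 1,y_r,v_sy_s\rangle$ over the domain $C$ are observed.
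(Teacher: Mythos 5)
Your proofs of (1) and (2) coincide with the paper's: (1) is exactly the bidegree gap $D_r^{*,-u+1}=\{0\}$ for $2\leq u<2^{r+2}-1$, and (2) is exactly the observation that $C\langle 1, y_r, v_sy_s\rangle$ sits inside the free module $C\{1,y_0\}$ over the domain $C$, hence is torsion-free. For (3) you take a genuinely different, and correct, route. The paper factors $\pi^1_r$ through the tower $D_1/(v_te_t)\{x_3^2\}\to D_2/(v_te_t)\{x_3^2\}\to\cdots\to D_r\{x_3^2\}$ and runs a descending induction: the kernel of each step $\pi^i_{i+1}$ is the ideal $(v_{i+1}x_3^2)$, identified with $D_i/(e_{i+1},v_{i+2}e_{i+2},\dots,v_re_r)\{v_{i+1}x_3^2\}$ and hence free over $\mathbb{Z}/2[v_{i+1}]$, so that $\pi^1_{i+1}(x)=0$ together with $v_{i+1}\pi^1_i(x)=0$ forces $\pi^1_i(x)=0$. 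You instead expand a lift $\tilde x$ in monomials in $v_2,\dots,v_r$ over $B$ and prove a coefficient-wise membership criterion for the relation ideal $J=(v_te_t)$, which is legitimate precisely because the $e_t$ (squares of determinants, by Lemma~\ref{lemma:3.2}) involve none of $v_2,\dots,v_r$; the hypotheses then become $c_{\mathbf d}\in I_s+I_{\mathbf d}$, and choosing $s$ with $d_s\geq 1$ gives $I_s\subseteq I_{\mathbf d}$ and hence $\tilde x\in J$. The two arguments exploit the same underlying fact that the relations do not mix distinct $v$-monomials; the paper buries it in the identification of $\ker\pi^i_{i+1}$, while you isolate it as an explicit criterion, which is more self-contained at the cost of more bookkeeping. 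Your restriction of (2) and (3) to $r\geq 2$ is also reasonable: for $r=1$ the hypotheses $v_2x=\cdots=v_rx=0$ are vacuous, so (3) is trivially true and (2) as literally stated would fail, but in the paper's application at $r=1$ the relevant classes are in addition annihilated by $v_0$, which is what actually gets used.
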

\begin{proof} 
Statement (1) is clear from the fact that $D_r^{*,-u+1}=\{0\}$ for $2 \leq u <2^{r+2}-1$.
Statement (2) is immediate from the fact that
$
C\langle y_r, v_s y_s \rangle
$
is a $C$-submodule of the free $C$-module $C\{ y_0 \}$.
In order to prove statement (3), we consider 
obvious projection
\[
\pi^{i}_{j}:D_i/(v_t e_t) \{ x_3^2\} \to D_j/(v_t e_t ) \{ x_3^2\},
\]
where $1\leq i\leq j \leq r$.
Since $t\leq r$, it is clear that 
\[
D_r/(v_t e_t)=D_r=\mathbb{Z}/2[x_2^2, x_3^2, x_{41}^2, \dots, x_{4n}^2, v_{r+1}, \dots, v_m]. 
\]
By the assumption, we have $\pi^1_{r}(x)=0$.
Moreover, for $i=1, \dots, r-1$, in $D_i$, $(v_t e_t)=( v_{i+1} e_{i+1}, \dots, v_r e_r)$
and
the kernel of 
$\pi^{i}_{i+1}$ is 
\[
\mathrm{Ker}\, \pi^{i}_{i+1}=(v_{i+1} x_3^2)= D_{i}/(e_{i+1}, v_{i+2} e_{i+2}, \dots, v_r e_r)\{ v_{i+1} x_3^2\}
\]
and $\mathrm{Ker}\, \pi^{i}_{i+1}$ is a free $\mathbb{Z}/p[ v_{i+1} ]$-module. 
Therefore, $v_{i+1}  \pi^1_i(x)=0$ implies $\pi^1_i(x)=0$.
Thus, we have $\pi^1_{i}(x)=0$ for $i=r-1, \dots, 2, 1$ and we have the desired result.
\end{proof}


We use the Morava K-theory in the proof of Proposition~\ref{proposition:4.2}. So, we briefly recall it.
The Morava K-theory $K(r+1)^*(X)$ of a space $X$ is a generalized cohomology theory whose coefficient ring is 
$\mathbb{Z}/2[v_{r+1}, v_{r+1}^{-1}]$
and there exists a natural transformation $\psi_{r+1}\colon \bp{r+1}^*(X)\to K(r+1)^*(X)$
sending $v_i$ to $0$ for $i\leq r$ and $v_{r+1}$ to $v_{r+1}$.
The first and the only non-trivial differential of the Atiyah-Hirzebruch spectral sequence 
for $K(r+1)^{*}(B\mathbb{Z}/2)$ is
\[
d_{2^{r+2}-1} (x_1)=v_{r+1}x_1^{2^{r+2}}=v_{r+1}Q_{r+1}(x_1)
\]
where $x_1$ is the generator of $H^1(B\mathbb{Z}/2;\mathbb{Z}/2)$. 
So, by the K\"{u}nneth formula, we have Lemma~\ref{lemma:5.2} below.


\begin{lemma} \label{lemma:5.2}
The first nontrivial differential of the Atiyah-Hirzebruch spectral sequence 
for $K(r+1)^*(B(\mathbb{Z}/2)^{n+2})$ is 
\[
d_{2^{r+2}-1}(x)=v_{r+1} Q_{r+1}(x)
\]
\
for all $x$ in $H^{*}(B(\mathbb{Z}/2)^{n+2};\mathbb{Z}/2)=E_2^{*,0}$ where $E_2^{*,*}$ is the $E_2$-term of the 
Atiyah-Hirzebruch spectral sequence for the Morava $K$-theory $K(r+1)^{*}(B(\mathbb{Z}/2)^{n+2})$.
\end{lemma}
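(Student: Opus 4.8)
The plan is to bootstrap the one-variable computation recalled just above to the $(n+2)$-fold product by exploiting the fact that $K(r+1)^*=\mathbb{Z}/2[v_{r+1},v_{r+1}^{-1}]$ is a graded field. Because the coefficient ring is a graded field, the Künneth map is an isomorphism, and this isomorphism is compatible with the Atiyah-Hirzebruch filtration; consequently the Atiyah-Hirzebruch spectral sequence for $K(r+1)^*(B(\mathbb{Z}/2)^{n+2})$ is isomorphic, as a spectral sequence, to the $(n+2)$-fold tensor product over $K(r+1)^*$ of the spectral sequence for $K(r+1)^*(B\mathbb{Z}/2)$. Since the latter carries no nontrivial differential before page $2^{r+2}-1$, the same holds for the tensor product, so $E_{2^{r+2}-1}=E_2=H^*(B(\mathbb{Z}/2)^{n+2};\mathbb{Z}/2)\otimes K(r+1)^*$ and the first possibly nonzero differential is $d_{2^{r+2}-1}$.

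First I would record that $d_{2^{r+2}-1}$ is a $K(r+1)^*$-linear derivation of $E_2$. The derivation property is the Leibniz rule, which holds on every page of the multiplicative Atiyah-Hirzebruch spectral sequence; on the first page carrying a nonzero differential it becomes a genuine derivation of $E_2$. Writing $H^*(B(\mathbb{Z}/2)^{n+2};\mathbb{Z}/2)=\mathbb{Z}/2[w_1,\dots,w_{n+2}]$ with $w_i=(Bp_i)^*(x_1)$ the pullback of the degree-one generator along the $i$-th projection $p_i\colon(\mathbb{Z}/2)^{n+2}\to\mathbb{Z}/2$, naturality of the spectral sequence together with the one-variable formula gives
\[
d_{2^{r+2}-1}(w_i)=(Bp_i)^*\bigl(v_{r+1}x_1^{2^{r+2}}\bigr)=v_{r+1}w_i^{2^{r+2}}=v_{r+1}Q_{r+1}(w_i),
\]
the last equality because $Q_{r+1}$ raises a one-dimensional class to its $2^{r+2}$-th power.

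To finish I would compare the two operations $d_{2^{r+2}-1}$ and $v_{r+1}Q_{r+1}$ on all of $H^*(B(\mathbb{Z}/2)^{n+2};\mathbb{Z}/2)$. Both are derivations: the former by the previous paragraph, the latter because $Q_{r+1}$ is a Milnor operation, hence a derivation, and multiplication by the central unit $v_{r+1}$ preserves this. Since they agree on the algebra generators $w_1,\dots,w_{n+2}$, they agree on the whole polynomial algebra, which is exactly the asserted identity $d_{2^{r+2}-1}(x)=v_{r+1}Q_{r+1}(x)$ for all $x\in E_2^{*,0}$.

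The step I expect to require the most care is the compatibility of the Künneth isomorphism with the spectral-sequence structure, that is, the claim that the graded-field Künneth isomorphism upgrades to an isomorphism of Atiyah-Hirzebruch spectral sequences and thereby forces the first differential to appear precisely on page $2^{r+2}-1$ with nothing earlier. Granting that, the remaining arguments are formal: the derivation property of the first nonzero differential, and the fact that two derivations of a polynomial algebra coincide once they coincide on generators.
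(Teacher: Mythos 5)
Your proposal is correct and follows essentially the same route as the paper: the paper likewise takes the one-variable differential $d_{2^{r+2}-1}(x_1)=v_{r+1}x_1^{2^{r+2}}=v_{r+1}Q_{r+1}(x_1)$ for $K(r+1)^*(B\mathbb{Z}/2)$ as input and deduces the lemma "by the K\"unneth formula," which is exactly the tensor-product-of-spectral-sequences argument you spell out. Your additional details (the graded-field K\"unneth isomorphism, naturality along the projections, and the comparison of the two derivations on polynomial generators) are just the unpacking of that one-line deduction, so there is nothing to change.
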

We use Lemma~\ref{lemma:5.2} to determine the differential $d_{2^{r+2}-1}$.

Now, we prove Proposition~\ref{proposition:4.2}.


\begin{proof}[Proof of Proposition~\ref{proposition:4.2}]
We prove Proposition~\ref{proposition:4.2} by induction on $r\geq 1$.

For $r=1$, by Proposition~\ref{proposition:4.1}, the proposition holds.

Assume that $u\geq 2^{r+1}$ and 
\[
E_u=E_{2^{r+1}}=C\langle 1, y_r, v_{s} y_{s} \rangle \oplus
D_1/(v_t e_t) \{ x_3^2 \} \oplus 
D_r \langle  x_3^2 y_r, x_{3 }z_{r} \rangle.
\]
Let us write 
$E^{even,*}$, $E^{odd,*}$ for 
\[
 C\langle 1, y_r, v_{s} y_{s} \rangle \oplus
D_1/(v_t e_t) \{ x_3^2 \} \oplus 
D_r \langle  x_3^2 y_r \rangle, 
\quad D_r \langle  x_3 z_r \rangle, 
\]
respectively.
We prove  $d_u=0$ for $u=2^{r+1}, \dots, 2^{r+2}-2$.
Since $\bp{m}^*$ is concentrated in even degrees, $d_u=0$ if $u$ is even. So, we assume that $u$ is odd.
Then, we have
\begin{align*}
d_u(E^{even,*} )& \subset E^{odd,*},
\\
d_u(E^{odd,*} )& \subset E^{even,*}.
\end{align*}
First, we prove $d_{u}=0$ on $E^{even, *}$.
By Lemma~\ref{lemma:5.1} (1),  
we have 
\[
d_{u}(E^{even,0})\subset (D_r \langle  x_3 z_r \rangle)^{*,-u+1}=\{ 0\}.
\]
Since $d_u$ is a $\bp{m}^*$-module homomorphism and $D_1/(v_t e_t) \{ x_3^2 \} \oplus 
D_r \langle  x_3^2 y_r \rangle$ is generated by 
\[
(D_1/(v_t e_t) \{ x_3^2 \} \oplus 
D_r \langle  x_3^2 y_r \rangle) \cap E^{even,0}
\]
 as a $\bp{m}^*$-module, we have 
\[
d_u(D_1/(v_t e_t) \{ x_3^2 \} \oplus 
D_r \langle  x_3^2 y_r \rangle )=\{ 0\}. 
\]
On the other hand, on $D_r \langle  x_3 z_r \rangle$, the multiplication by $x_3^2$ is injective.
Therefore, since 
\begin{align*}
x_3^2 d_u(C\langle 1, y_r, v_s y_s \rangle)& = d_u(x^2 C\langle 1, y_r, v_s y_s \rangle)
\\
&\subset  d_u(D_1/(v_t e_t) \{ x_3^2 \} \oplus 
D_r \langle  x_3^2 y_r \rangle )=\{ 0\},
\end{align*}
we have 
\[
d_u(C\langle 1, y_r, v_s y_s \rangle)=\{ 0\}
\]
and so 
$d_u(E^{even,*})=\{ 0\}$.
Next, we prove $d_u=0$ on $E^{odd,*}$. 
By Lemma~\ref{lemma:5.1} (2), (3), we have $d_u(E^{odd,*})\subset D_r\langle x_3^2y_r \rangle$
and by Lemma~\ref{lemma:5.1} (1), $d_u(E^{odd,0})\subset  (D_r\langle x_3^2 y_r \rangle)^{*,-u+1}=\{0\}$.
Since $E^{odd,*}$ is generated by $E^{odd,0}$ as a $\bp{m}^*$-module, we have 
$d_u(E^{odd,*})=\{0\}$. 
Thus, $d_u$ is trivial for $2^{r+1}\leq u \leq 2^{r+2}-2$.

Next, we compute $d_{2^{r+2}-1}$ for $r\leq k-1$.
We consider homomorphisms 
\begin{align*}
& \rho^{m}_{r+1}\colon \bp{m}^*(BG_n) \to \bp{r+1}^*(BG_n),
\\
&\iota_{n}^* \colon \bp{r+1}^*(BG_n) \to \bp{r+1}^*(B(\mathbb{Z}/2)^{n+2}),
\\
&\psi_{r+1}\colon \bp{r+1}^*(B(\mathbb{Z}/2)^{n+2}) \to  K(r+1)^{*}(B(\mathbb{Z}/2)^{n+2}).
\end{align*}
We denote the induced homomorphisms among the Atiyah-Hirzebruch spectral sequences by the same symbols.
Note that $\deg v_{r+2}=-2^{r+3}+2$.
Then,  the induced homomorphism $\rho^m_{r+1}$ between 
Atiyah-Hirzebruch spectral sequences for $\bp{m}^*(BG_n)$ and
$\bp{r+1}^*(BG_n)$ is an isomorphism on $E_{2^{r+2}-1}^{*,w}$ 
for $-2^{r+3}+2 < w$.
A nonzero element in 
$D_r\langle x_3^2y_r, x_3 z_r\rangle$ of the 
$E_{2^{r+2}-1}$-term of the Atiyah-Hirzebruch spectral sequence for $\bp{r+1}^{*}(BG_n)$ is of the form 
\[
v_{r+1}^{k} f
\]
where $f$ is a nonzero element in $H^{*}(BG_n)$.
The $E_{2^{r+2}-1}$-term of the Atiyah-Hirzebruch spectral sequence for $K(r+1)^*(B(\mathbb{Z}/2)^{n+2})$ 
is equal to its $E_2$-term
\[
\mathbb{Z}/2[v_{r+1}, v_{r+1}^{-1}] \otimes H^{*}(B(\mathbb{Z}/2)^{n+2}).
\]
The induced homomorphism $\psi_{r+1}\circ \iota_n^*$ sends 
$v_{r+1}^k f$ to $v_{r+1}^k \iota_n^*(f)$. 
By Proposition~\ref{proposition:2.2}, $\iota_n^{*}\colon H^{*}(BG_n)\to H^{*}(B(\mathbb{Z}/2)^{n+2})$ is injective.
So, the induced homomorphism  $\psi_{r+1}\circ \iota_n^{*}$ is injective on $D_r\langle x_3^2y_r, x_3 z_r\rangle$.
Similarly, the kernel of $\psi_{r+1}\circ \iota_n^*$
on $D_1/(v_te_t)\{x_3^2\}$ of the $E_{2^{r+2}-1}$-term of the Atiyah-Hirzebruch spectral sequence for $\bp{r+1}^{*}(BG_n)$
  is the kernel of the obvious projection 
\[
\pi^1_r\colon D_1/(v_te_t)\{ x_3^2\} \to D_r\{ x_3^2\}.
\]
By Lemma~\ref{lemma:5.2}, the first non-trivial differential in the Atiyah-Hirzebruch spectral sequence 
for the Morava $K$-theory $K(r+1)^*(B(\mathbb{Z}/2)^{n+2})$ is 
\[
d_{2^{r+2}-1}(x)=v_{r+1}Q_{r+1}(x) \in E_{2^{r+2}-1}^{*, -2^{r+2}+2}
\]
for $x\in E_{2^{r+2}-1}^{*,0}$.
In the Atiyah-Hirzebruch spectral sequence for $\bp{m}^*(BG_n)$, we have 
\[
d_{2^{r+2}-1}(E^{even,0})\subset (D_r\langle x_3z_r\rangle)^{*,-2^{r+2}+2}
\]
and
\[
d_{2^{r+2}-1}(E^{odd,0}) \subset (D_1/(v_te_t)\{ x_3^2\} \oplus D_r\langle x_3^2 y_r\rangle)^{*,-2^{r+2}+2}.
\]
Hence, we have 
\begin{align*}
d_{2^{r+2}-1}(\phi_r(x_I))&=v_{r+1} \phi_{r+1}(x_I) x_3, \\
d_{2^{r+2}-1}(\phi_r(x_I)x_3)&=v_{r+1} \phi_{r+1}(x_I) x_3^2+\alpha_I,
\end{align*}
where $\alpha_I$ is in $\mathrm{Ker}\, \pi^1_r$.
However, since 
\[
v_2 \phi_r(x_I)x_3=\cdots =v_r\phi_r(x_I)x_3=0
\]
 and 
 \[
 v_2 \phi_{r+1}(x_I)x_3^2=\cdots =v_r\phi_{r+1}(x_I)x_3^2=0, 
 \]
we have
$v_2\alpha_I=\cdots =v_r \alpha_I=0$. By Lemma~\ref{lemma:5.1} (3), we have $\alpha_I=0$.
Therefore, we have 
\begin{align*}
d_{2^{r+2}-1}(\phi_r(x_I))&=v_{r+1} \phi_{r+1}(x_I) x_3, \\
d_{2^{r+2}-1}(\phi_r(x_I)x_3)&=v_{r+1} \phi_{r+1}(x_I) x_3^2.
\end{align*}

Now, we compute the $E_{2^{r+2}}$-terms.
We have that 
\[
\mathrm{Im}\, d_{2^{r+2}-1}=  (v_{r+1}e_{r+1}) \{x_3^2\} \oplus D_r \langle v_{r+1} x_3^2 y_{r+1}, v_{r+1} x_3 z_{r+1}  \rangle 
,
\]
where $(v_{r+1}e_{r+1})$ is the ideal generated by $v_{r+1}e_{r+1}$ in $D_1/(v_te_t)$ and  $t$ ranges over 
$\{ 2, \dots, r\}$.
The kernel of the multiplication by $x_3^2$ is  $C\langle v_1, v_2, \dots, v_r, v_{r}y_r , v_s y_s\rangle$.
On the other hand, by Lemma~\ref{lemma:3.3}, we have
\begin{align*}
 \mathrm{Ker}\, d_{2^{r+1}-1} \cap \left( D_1/(v_te_t )  
 \{ x_3^2\} \oplus D_r\langle x_3^2 y_r, x_3 z_r \rangle \right) \\
= 
D_1/(v_t e_t)\{ x_3^2 \}\oplus D_r \langle  x_{3}^2 y_{r+1}, x_3 z_{r+1},  \rangle.
\end{align*}
Hence, we have the required $E_{2^{r+2}}$-term.

Finally, we complete the proof by showing $d_u=0$ for $u\geq 2^{k+1}$.
Since $k=\min\{ m, n+1\}$, we need to consider two cases:
\\
Case 1. The case $k=m$. In this case, $r=k$, $u\geq 2^{r+1}$.
Then, we have 
\[
E_u=E^{even,*}\oplus E^{odd,*}.
\]
Moreover, we have $D_r=\mathbb{Z}/2$.
On the one hand,  we have
\[
(D_r\{ x_3^2\})^{*, -u+1}=\{0\}.
\] 
So, by Lemma~\ref{lemma:5.1} (2), (3), 
we have $d_u(E^{odd,*})\subset D_r\langle x_3^2 y_r\rangle$.
It is clear that $d_u(E^{even,*})\subset D_r\langle x_3 z_r\rangle$.
On the other hand, from $D_r=\mathbb{Z}/2$, we have
\[
(D_r\langle x_3^2 y_r, x_3 z_r \rangle)^{*,-u+1} =\{0\}.
\]
Therefore, we  have
\begin{align*}
d_u(E_u) \subset (D_r\langle x_3^2 y_r , x_3 z_r \rangle)^{*,-u+1} =\{0\}.
\end{align*}
\\
Case 2. 
The case $k=n+1$. In this case, as in the proof of Theorem~\ref{theorem:1.4}, since $\{ z_k\} =\emptyset$, 
the $E_{2^{k+1}}$-term has no odd degree generators. Therefore, 
the spectral sequence collapses at the $E_{2^{k+1}}$-level.
\end{proof}

{\it Acknowledgement.}
The author would like to thank the anonymous referee for his or her advice.
 It was helpful in improving the presentation of this paper.




\begin{bibdiv}

\begin{biblist}

\bib{atiyah-hirzebruch-1962}{article}{
   author={Atiyah, M. F.},
   author={Hirzebruch, F.},
   title={Analytic cycles on complex manifolds},
   journal={Topology},
   volume={1},
   date={1962},
   pages={25--45},
   issn={0040-9383},
   review={\MR{0145560}},
   doi={10.1016/0040-9383(62)90094-0},
}

\bib{antieau-2016}{article}{
   author={Antieau, Benjamin},
   title={On the integral Tate conjecture for finite fields and
   representation theory},
   journal={Algebr. Geom.},
   volume={3},
   date={2016},
   number={2},
   pages={138--149},
   issn={2214-2584},
   review={\MR{3477951}},
   doi={10.14231/AG-2016-007},
}

\bib{colliot-thelene-szamuely-2010}{article}{
   author={Colliot-Th\'el\`ene, Jean-Louis},
   author={Szamuely, Tam\'as},
   title={Autour de la conjecture de Tate \`a coefficients ${\bf Z}_{\ell}$
   pour les vari\'et\'es sur les corps finis},
   language={French},
   conference={
      title={The geometry of algebraic cycles},
   },
   book={
      series={Clay Math. Proc.},
      volume={9},
      publisher={Amer. Math. Soc., Providence, RI},
   },
   date={2010},
   pages={83--98},
   review={\MR{2648666}},
}

\bib{kameko-2015}{article}{
   author={Kameko, Masaki},
   title={On the integral Tate conjecture over finite fields},
   journal={Math. Proc. Cambridge Philos. Soc.},
   volume={158},
   date={2015},
   number={3},
   pages={531--546},
   issn={0305-0041},
   review={\MR{3335426}},
   doi={10.1017/S0305004115000134},
}
		
\bib{kono-yagita-1993}{article}{
   author={Kono, Akira},
   author={Yagita, Nobuaki},
   title={Brown-Peterson and ordinary cohomology theories of classifying
   spaces for compact Lie groups},
   journal={Trans. Amer. Math. Soc.},
   volume={339},
   date={1993},
   number={2},
   pages={781--798},
   issn={0002-9947},
   review={\MR{1139493}},
   doi={10.2307/2154298},
}

\bib{landweber-1972}{article}{
   author={Landweber, Peter S.},
   title={Elements of infinite filtration in complex cobordism},
   journal={Math. Scand.},
   volume={30},
   date={1972},
   pages={223--226},
   issn={0025-5521},
   review={\MR{0326762}},
   doi={10.7146/math.scand.a-11077},
}

\bib{pirutka-yagita-2015}{article}{
   author={Pirutka, Alena},
   author={Yagita, Nobuaki},
   title={Note on the counterexamples for the integral Tate conjecture over
   finite fields},
   journal={Doc. Math.},
   date={2015},
   number={Extra vol.: Alexander S. Merkurjev's sixtieth birthday},
   pages={501--511},
   issn={1431-0635},
   review={\MR{3404393}},
}

\bib{quick-2016}{article}{
	author = {Quick, Gereon},
        eprint = {arXiv:1611.09136},
	title = {Examples of non-algebraic classes in the Brown-Peterson tower},
	date = {2016},
}	
	
\bib{soule-voisin-2005}{article}{
   author={Soul\'e, C.},
   author={Voisin, C.},
   title={Torsion cohomology classes and algebraic cycles on complex
   projective manifolds},
   journal={Adv. Math.},
   volume={198},
   date={2005},
   number={1},
   pages={107--127},
   issn={0001-8708},
   review={\MR{2183252}},
   doi={10.1016/j.aim.2004.10.022},
}

\bib{totaro-1997}{article}{
   author={Totaro, Burt},
   title={Torsion algebraic cycles and complex cobordism},
   journal={J. Amer. Math. Soc.},
   volume={10},
   date={1997},
   number={2},
   pages={467--493},
   issn={0894-0347},
   review={\MR{1423033}},
   doi={10.1090/S0894-0347-97-00232-4},
}


\bib{yagita-2005}{article}{
   author={Yagita, Nobuaki},
   title={Applications of Atiyah-Hirzebruch spectral sequences for motivic
   cobordism},
   journal={Proc. London Math. Soc. (3)},
   volume={90},
   date={2005},
   number={3},
   pages={783--816},
   issn={0024-6115},
   review={\MR{2137831}},
   doi={10.1112/S0024611504015084},
}

	\end{biblist}
	
	\end{bibdiv}

\end{document}